\newtheorem{theorem}{Theorem}[section]
\newtheorem*{theorem*}{Theorem B}
\newtheorem{lemma}[theorem]{Lemma}
\newtheorem{proposition}[theorem]{Proposition}
\newtheorem{corollary}[theorem]{Corollary}
\newtheorem*{definition*}{Definition}
\newtheorem*{remark*}{Remark}
\newtheorem*{observation*}{Observation}
\newtheorem*{assumption*}{Assumption}
\newtheorem*{question*}{Question}
\newtheorem*{problem*}{Problem}
\theoremstyle{remark}
\newtheorem{remark}[theorem]{Remark}
\newcommand{\R}{\mathbb{R}}
\newcommand{\N}{\mathbb{N}}
\newcommand{\E}{\mathbb{E}}
\newcommand{\TT}{\mathcal{T}}
\newcommand{\pless}{\preccurlyeq}
\newcommand{\an}{\text{\, and \,}}
\begin{document}

\title{Boundedness of  Gaussian random sums on trees}

\author
{Yong Han}
\address
{Yong HAN: College of Mathematics and Statistics, Shenzhen University, Shenzhen 518060, Guangdong, China}
\email{hanyongprobability@gmail.com}

\author
{Yanqi Qiu}
\address
{Yanqi QIU: School of Mathematics and Statistics, Wuhan University, Wuhan 430072, Hubei, China;  Institute of Mathematics, AMSS, Chinese Academy of Sciences, Beijing 100190, China}
\email{yanqi.qiu@hotmail.com}

\author{Zipeng Wang}
\address{Zipeng WANG: College of Mathematics and Statistics, Chongqing University, Chongqing,
401331, China}
\email{zipengwang2012@gmail.com, zipengwang@cqu.edu.cn}

\thanks{Y. Qiu is supported by grants NSFC Y7116335K1,  NSFC 11801547 and NSFC 11688101 of National Natural Science Foundation of China. Z. Wang is supported by NSFC 11601296}

\begin{abstract}
Let $\mathcal{T}$ be a rooted tree endowed with the natural partial order $\pless$.
Let $(Z(v))_{v\in \mathcal{T}}$ be a sequence of independent standard Gaussian random variables and let  $\alpha = (\alpha_k)_{k=1}^\infty$ be a sequence of real numbers with $\sum_{k=1}^\infty \alpha_k^2<\infty$. Set $\alpha_0  =0$ and define a Gaussian process on $\mathcal{T}$ in the following way:
\[
G(\mathcal{T}, \alpha; v): = \sum_{u\pless v} \alpha_{|u|} Z(u), \quad v \in \mathcal{T},
\]
where $|u|$ denotes the graph distance between the vertex $u$ and the root vertex. Under mild assumptions on $\mathcal{T}$, we obtain a necessary and sufficient condition for the almost sure boundedness of the above Gaussian process. Our condition is also necessary and sufficient for the almost sure  uniform convergence of the Gaussian process $G(\mathcal{T}, \alpha; v)$ along all rooted geodesic rays in $\mathcal{T}$.
\end{abstract}

\subjclass[2020]{Primary 60G15; Secondary 06A06, 05C05}
\keywords{Gaussian processes; boundedness and continuity; uniform convergence}

\maketitle

\setcounter{equation}{0}

\section{Introduction}

\subsection{Gaussian random sums on trees}
Let $\TT$ be a rooted tree endowed with the natural partial order  $\pless$ and the graph distance $d(\cdot,\cdot)$, the root vertex will be denoted by $\rho$. We shall always assume that $\TT$ is locally finite and has no leaves.  For any vertex $v\in \TT$, set
$
|v|: = d(\rho, v).
$
In this paper, we study the almost sure boundedness of Gaussian processes on $\TT$ defined in the following way:
\begin{align}\label{def-X-onT}
G(\TT, \alpha; v): = \sum_{u\pless v} \alpha_{|u|} Z(u), \quad v \in \TT,
\end{align}
where $\alpha_0 \equiv 0$, $\alpha = (\alpha_k)_{k=1}^\infty$ is a sequence of real numbers with $\sum_{k=1}^\infty \alpha_k^2<\infty$ and  $(Z(v))_{v\in \TT}$ is a sequence of independent $N(0,1)$ Gaussian random variables.

A necessary and sufficient condition for the almost sure boundedness of the above Gaussian processes was given by Fernique \cite{Fernique-1976} through the  majorizing measures, see also Talagrand \cite{Talagrand-acta} for more general Gaussian processes. The condition obtained by Fernique is however not explicit in terms of the sequence $(\alpha_k)_{k=1}^\infty$. Lifshits and Linde studied  in \cite{Lifshits-TAMS,LW-2011,Russian-EJP}  the necessary conditions and  sufficient conditions for the almost sure boundedness of  such Gaussian processes in terms of certain growth conditions on the sequence $(\alpha_k)_{k=1}^\infty$.  In fact, Lifshits and Linde \cite{Russian-EJP} also investigated the almost sure boundedness of the following more general  Gaussain processes on $\TT$:
\begin{align}\label{def-X-onT-two}
G(\TT, \alpha, \sigma; v):=\sigma(v)\sum_{u\pless v} \alpha(u) Z(u), \quad v \in \TT,
\end{align}
where $\alpha(\cdot)$ and $\sigma(\cdot)$ are real-valued functions. However, in general, there is a gap between the necessary condtions and  the sufficient conditions in \cite{Russian-EJP}. For the special case where  $(\alpha_{k})_{k\geq 1}$ is a non-increasing sequence of positive numbers, their necessary conditions and sufficient conditions coincide.


We obtain in Theorem \ref{thm-bdd-degree} below an explicit necessary and sufficient condition in terms of the growth condition on the sequence $(\alpha_k)_{k=1}^\infty$ for the almost sure boundedness of the Gaussian process $(G(\TT, \alpha; v))_{v\in \TT}$ on a large class of rooted trees. In particular, for any rooted homogeneous tree, our condition reads as
\[
\sum_{l=0}^\infty\Big( \frac{1}{l+1} \sum_{k=l+1}^\infty \alpha_k^2\Big)^{1/2}<\infty.
\]
  Moreover, the condition obtained in Theorem \ref{thm-bdd-degree} is also necessary and sufficient for the almost sure  uniform convergence of the Gaussian process $(G(\TT, \alpha; v))_{v\in \TT}$  along all rooted geodesic rays.

To deal with the convergence of our Gaussian process on $\TT$ along rooted geodesic rays,  it is convenient to represent them as a sequence of coupled Gaussian processes on the boundary $\partial \TT$ of $\TT$. For this purpose,  let us recall the classical definition  of the boundary $\partial \TT$.
Denote by  $\N=\{1, 2, \cdots\}$ the set of postive integers. A rooted geodesic ray $\xi$ in $\TT$ is a sequence $\xi = (\rho, v_1, v_2, \cdots)$ of vertices of $\TT$ with $v_1\pless v_2\pless\cdots$ and $d(v_n, v_{n+1}) = 1$ for all $n\in \N$. The boundary of $\TT$ is defined by
\[
\partial \TT : = \Big\{\xi\Big|\text{$\xi$ is a rooted geodesic ray in $\TT$}\Big\}.
\]
We endow $\partial \TT$ with the metric
$
d(\xi, \zeta): = 2^{-|\xi \wedge \zeta|}$ for all $\xi, \zeta \in \partial \TT$,  where $\xi\wedge \zeta\in \TT$ is the largest (with repect to the partial order $\pless$) common vertex in the two rays $\xi$ and $\zeta$ and we use the convention $
|\xi\wedge \xi| = \infty$ for all $\xi \in \partial \TT$.

  For any $n\in \N$, define the map $\pi_n: \partial \TT \rightarrow \TT$ by
\[
\partial \TT \ni \xi = (\rho, v_1, v_2, \cdots) \xrightarrow{\,\,\pi_n\,\,} \pi_n(\xi) =  v_n \in \TT.
\]
 Then the Gaussian process defined in \eqref{def-X-onT} can be  represented as
\begin{align}\label{def-X}
X_n(\TT, \alpha; \xi): =  \sum_{k=1}^n \alpha_k Z(\pi_k(\xi)), \quad n \in \N, \xi \in\partial \TT.
\end{align}
We investigate the maxima of these Gaussian processes:
\begin{align}\label{def-maxima}
\begin{split}
M_n(\TT, \alpha) := \sup_{\xi\in \partial \TT}| X_n(\TT, \alpha; \xi)| \an  M^*(\TT, \alpha) := \sup_{n\in \N} \sup_{\xi\in \partial \TT} | X_n(\TT, \alpha; \xi)|.
\end{split}
\end{align}
We shall also need the following definition: for any fixed $\xi \in \partial \TT$, define a series
\begin{align}\label{def-X-proc}
X(\TT, \alpha; \xi): = \sum_{k=1}^\infty \alpha_k Z(\pi_k(\xi)).
\end{align}
A priori, it is not known whether almost surely,  the series \eqref{def-X-proc} converges for all $\xi \in \partial \TT$. In Theorem \ref{thm-bdd-degree} below, for a large class of trees, we give a necessary and sufficient condition for the  almost sure uniform convergence (with respect to $\xi \in \TT$) of the series \eqref{def-X-proc}.

 The process $(M_n(\TT, \alpha))_{n =1}^\infty$ is closely related to the displacements of branching random walks. The simplest case of a branching random walk is described as follows.  One starts a system with one particle at location 0 at time 0. Suppose that $v$ is a particle at location $S_v$ at time $k$, then this particle $v$ dies at time $k+1$ and gives birth to two children $v_1, v_2$. The two children $v_1, v_2$ start to move independently to  new
locations such that the increments are independent of $S_v$ and distributed as $N(0,\alpha_k^2)$. Let $D_n$ denote the collection of all particles at time $n$. Then the maximal displacement at time $n$ is
\[
\widetilde{M}_n^{BRW}(\alpha):=\max_{v\in D_n} S_v.
\]
Similarly, we can also define
\[
M_n^{BRW}(\alpha):=\max_{v\in D_n} |S_v|.
\]
Let $\TT_2$ denote the binary tree.  Clearly, we have the equality in distribution:
\[
M_n(\TT_2, \alpha) \stackrel{d}{=}  M_n^{BRW}(\alpha).
\]
Given $0=s_0<s_1< \cdots < s_m = 1$ and assume that the sequence $(\alpha_k^2)_{k\ge 1}$ is monotone and is given in the following form:
\[
\alpha_k^2 = \alpha_{k'}^2  \quad \text{ for any $k, k' \in [s_{l-1}n, s_l n)$},
\]
then  Fang and Zeitouni \cite{FZ-2012} obtained the  precise asymptotics  of $\widetilde{M}_n^{BRW}(\alpha)$. The case of  general sequence $(\alpha_k)_{k\ge 1}$ was left open in Fang and Zeitouni \cite{FZ-2012}. The results presented in the current paper provide  in particular the asymptotics of $M_n^{BRW}(\alpha)$ up to a universal multiplicative constant for general sequence $(\alpha_k)_{k\ge 1}$.

\subsection{Main results}

For each $v\in \TT$, let
\[
D(v): = \#\Big\{u\in \TT\Big| v\pless u \an  d(u,v)=1\Big\}.
\]

\begin{theorem}\label{thm-bdd-degree}
Let $\TT$ be a rooted tree such that
\begin{align}\label{degree-assumption}
2\le  D_{min}(\TT) = \inf_{v\in \TT} D(v) \le \sup_{v\in \TT} D(v) = D_{max}(\TT)<\infty
\end{align}
and let  $\alpha = (\alpha_k)_{k=1}^\infty$ be a sequence of real numbers with $\sum_{k=1}^\infty \alpha_k^2<\infty$.
Then the following conditions are equivalent:
\begin{itemize}
\item[(1)] $M^*(\TT, \alpha)$ is almost surely bounded;
\item[(2)] the following convergence holds:
\begin{align}\label{eqn-ssufficiency}
\lim_{N\rightarrow\infty}\mathbb{E}\Big[\sup_{n\geq N}\sup_{m\geq 0}\sup_{\xi\in\partial\TT}\Big|\sum_{k=n}^{n+m}\alpha_kZ(\pi_k(\xi))\Big|^2\Big]=0;
\end{align}
\item[(3)] almost surely, the series \eqref{def-X-proc}
converges uniformly in $\xi \in \partial \TT$;
\item[(4)] the sequence $\alpha = (\alpha_k)_{k=1}^\infty$ satisfies
\begin{align}\label{def-Q-alpha}
Q(\alpha): =  \sum_{l=0}^\infty  \Big( \frac{1}{l+1}\sum_{k=l+1}^\infty \alpha_k^2\Big)^{1/2}<\infty.
\end{align}
\end{itemize}
Moreover, under one of the above equivalent conditions,  there exist two numerical constants $c_1, c_2>0$ (independent of $\TT$ and $\alpha$) such that
\begin{align}\label{eqn::equivalent-sup}
c_1  \sqrt{\log D_{min}(\TT)}\cdot Q(\alpha) \le \Big(\E\Big[ \sup_{\xi\in \partial \TT} | X(\TT, \alpha; \xi)|^2\Big]\Big)^{1/2}\le  c_2  \sqrt{\log D_{max}(\TT)} \cdot Q(\alpha)
\end{align}
and
\begin{align}\label{eqn::equivalent-sup-bis}
c_1  \sqrt{\log D_{min}(\TT)}\cdot Q(\alpha) \le (\E[M^*(\TT, \alpha)^2])^{1/2} \le c_2  \sqrt{\log D_{max}(\TT)} \cdot Q(\alpha).
\end{align}
\end{theorem}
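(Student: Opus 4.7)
The plan is to close the cycle $(4) \Rightarrow (2) \Rightarrow (3) \Rightarrow (1) \Rightarrow (4)$, with the two-sided estimates \eqref{eqn::equivalent-sup} and \eqref{eqn::equivalent-sup-bis} emerging from the quantitative endpoints. The easy links come first: $(3)\Rightarrow(1)$ is immediate, while $(2)\Rightarrow(3)$ follows by picking an increasing sequence $N_j$ that makes the expectation in \eqref{eqn-ssufficiency} summable in $j$ (say bounded by $2^{-2j}$), applying Chebyshev and Borel--Cantelli, and then promoting $L^2$-decay to almost-sure uniform decay via Borell's Gaussian concentration inequality.

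The real substance is the quantitative equivalence $(1)\Leftrightarrow(4)$. I would work throughout with the intrinsic pseudo-metric of $X(\TT,\alpha;\cdot)$, which collapses to
\[
d_X(\xi,\zeta) = \bigl(\E|X(\TT,\alpha;\xi) - X(\TT,\alpha;\zeta)|^2\bigr)^{1/2} = \sqrt{2}\,\sigma_{|\xi\wedge\zeta|}, \qquad \sigma_m^2 := \sum_{k>m}\alpha_k^2.
\]
Hence $(\partial\TT, d_X)$ is an \emph{ultrametric} space, and the degree hypothesis \eqref{degree-assumption} sandwiches the covering number $N(\partial\TT,d_X,\sqrt{2}\,\sigma_m)$ between $D_{min}^m$ and $D_{max}^m$. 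This structural observation is the engine of both the upper and lower bounds, since on ultrametric index spaces Dudley's entropy bound is known to be tight up to absolute constants.

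For $(4)\Rightarrow(2)$ together with the upper bounds, I would apply Dudley's inequality to the tail process $\sum_{k\ge N}\alpha_k Z(\pi_k(\xi))$, obtaining an estimate of the shape $\sqrt{\log D_{max}}\,\sum_{m\ge N}\sqrt{m}\,(\sigma_{m-1}-\sigma_m)$. Abel summation, combined with $\sqrt{m+1}-\sqrt{m}\asymp 1/\sqrt{m+1}$ and the observation that $\sqrt{N}\,\sigma_{N-1}\to 0$ under $Q(\alpha)<\infty$, identifies this quantity with $\sqrt{\log D_{max}}$ times a tail of $Q(\alpha)$, which vanishes as $N\to\infty$ and delivers \eqref{eqn-ssufficiency}. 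To upgrade $\sup_\xi|X|$ to $M^*$ in \eqref{eqn::equivalent-sup-bis}, I would use that for each fixed $N$ the process $\sup_\xi|X_k(\TT,\alpha;\xi) - X_{N-1}(\TT,\alpha;\xi)|$ is a nonnegative submartingale in $k\ge N$ (conditional Jensen applied to the independent level-$k$ Gaussians) and invoke Doob's $L^2$ maximal inequality.

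The main obstacle is the matching lower bound $\gtrsim \sqrt{\log D_{min}}\cdot Q(\alpha)$ in $(1)\Rightarrow(4)$: a single application of Sudakov minoration at scale $\sqrt{2}\,\sigma_m$ recovers only one term of the target series, not the full $Q(\alpha)$. The remedy is hierarchical and exploits the tree's self-similarity: for each level $m$ one selects an antichain of $D_{min}^m$ vertices, and after conditioning on $\mathcal{F}_m$ the process on the geodesic rays through these vertices splits as $D_{min}^m$ mutually independent subtree suprema translated by independent Gaussian offsets. Applying Sudakov scale-by-scale and exploiting the cross-scale independence to \emph{sum} (rather than maximise) the scale-$m$ contributions is what produces the full $Q(\alpha)$ lower bound; this can equivalently be phrased as the Fernique--Talagrand majorizing-measure lower bound specialised to ultrametric spaces. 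The delicate point will be the bookkeeping through this recursion so that the numerical constants $c_1, c_2$ do not degrade.
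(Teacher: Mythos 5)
Your proposal is correct in outline, but it reaches the two-sided estimate by a genuinely different route from the paper. The paper never touches Dudley or Sudakov directly: it first reduces an arbitrary tree satisfying \eqref{degree-assumption} to the binary tree via root- and order-preserving embeddings $\TT(2^{\mathfrak{l}^-})\hookrightarrow\TT(\mathfrak{q})\hookrightarrow\TT(2^{\mathfrak{l}^+})$ and stochastic domination, and on $\TT_2$ it identifies the covariance kernel $C_\alpha$ with that of a random Walsh--Fourier series on the Cantor group $\{-1,1\}^{\N}$; the sharp two-sided bound then comes from the Marcus--Pisier theorem, after an explicit computation of the non-decreasing rearrangement of $\sigma_\alpha$ showing the entropy integral equals $\sum_l Q_l(\alpha)/(\sqrt{l+3}+\sqrt{l+2})\asymp Q(\alpha)$. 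You instead exploit that $(\partial\TT,d_X)$ is ultrametric with covering numbers pinched between $D_{min}^m$ and $D_{max}^m$, take Dudley for the upper bound, and the hierarchical Sudakov/growth-functional lower bound (Dudley being tight on homogeneous ultrametric spaces) for the lower bound. Your route is more self-contained probabilistically and avoids the Fourier-analytic detour, but it shifts the burden onto the lower bound: you correctly flag that a naive level-by-level recursion yields only $\sqrt{\log D_{min}}\sum_k\alpha_k$, which is strictly weaker than $\sqrt{\log D_{min}}\,Q(\alpha)$ and would not close $(1)\Rightarrow(4)$ (cf.\ the example in Remark \ref{Thm::Threeconditions}); the scales must be grouped so that successive separations $\sigma_{m_j}$ decay geometrically before the growth lemma is iterated, and the antichain offsets at level $m$ are separated but not mutually independent, so you need Talagrand's separation-based version of the lemma rather than literal independence. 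Two small points you leave implicit that both approaches require: in $(1)\Rightarrow(4)$ one must pass from a.s.\ finiteness of $M^*$ to $\E[M^*{}^2]<\infty$ (the paper cites \cite[Theorem 7.1]{ledouxbook}) before the lower bound can be applied to the truncations $\alpha^{(n)}$; and the double supremum over $n\ge N$ and $m\ge 0$ in \eqref{eqn-ssufficiency} needs either the reverse-submartingale companion to your forward Doob argument (as in the paper) or the elementary splitting $\sum_{k=n}^{n+m}=\sum_{k=N}^{n+m}-\sum_{k=N}^{n-1}$. Your $(2)\Rightarrow(3)$ via Borel--Cantelli and Borell's inequality works but is heavier than needed, since the quantity in \eqref{eqn-ssufficiency} is monotone in $N$ and monotone convergence finishes it in one line.
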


\begin{remark}\label{Thm::Threeconditions}
 Consider the following three conditions on a non-negative number sequence  $\alpha = (\alpha_k)_{k\geq 1}$:
\begin{itemize}
  \item[(c-1)] $\lim_{k\rightarrow \infty}\alpha_k=0$ and $\sum_{k=1}^{\infty}k|\alpha_{k+1}-\alpha_k|<\infty$.
  \item[(c-2)]
  $
  Q(\alpha)<\infty.
  $
  \item[(c-3)]
  $
  \sum_{k=1}^{\infty}\alpha_k<\infty.
  $
\end{itemize}
We have (c-1)$\Longrightarrow$ (c-2)$\Longrightarrow$(c-3), see the Appendix for the details. In general, these implications can not be reversed. Indeed,   for all $k\in \N$, define
\begin{align*}
\alpha_k:=\begin{cases}
\frac{1}{n^2}&\text{ if }k=2^n\\
0&\text{ otherwise }
\end{cases}
\an \beta_k:=\begin{cases}
\frac{1}{n^2}&\text{ if }k=2n\\
0&\text{ otherwise }.
\end{cases}
\end{align*}
Then $(\alpha_k)_{k\ge 1}$ satisfies (c-3) but does not satisfy (c-2) and  $(\beta_k)_{k\ge 1}$ satisfies (c-2) but does not satisfy (c-1).  However, when $(\alpha_k)_{k\geq 1}$ is non-increasing, then  all the three conditions (c-1),(c-2) and (c-3) are equivalent.
\end{remark}

\begin{remark}
For any sequence $\alpha = (\alpha_k)_{k=1}^\infty$ of real numbers with $\sum_{k=1}^\infty \alpha_k^2 <\infty$, the kernel $C_\alpha(\cdot, \cdot)$ defined by
\begin{align}\label{def-C-kernel}
C_\alpha(\xi, \zeta)= \sum_{k=1}^{|\xi\wedge \zeta|} \alpha_k^2, \quad \xi, \zeta \in \partial \TT
\end{align}
is non-negative definite and therefore corresponds to a centered Gaussian process
\[
(\widehat{X}(\TT, \alpha; \xi))_{\xi \in \partial \TT}.
\]
By Marcus-Pisier's approach  or Talagrand's approach, under the assumptions \eqref{degree-assumption} and  \eqref{eqn::degree-bound}, the Gaussian process $(\widehat{X}(\TT, \alpha; \xi))_{\xi \in \partial \TT}$ admits a continuous version.  This almost sure conditinuity  implies
\begin{align}\label{X-hat-bdd}
\sup_{\xi\in \partial \TT} |\widehat{X}(\TT, \alpha; \xi)|<\infty, a.s.
\end{align}
Indeed, in the case of binary tree $\TT_2$, following Marcus and Pisier,  we relate the Gaussian process $(\widehat{X}(\TT_2, \alpha; \xi))_{\xi \in \partial \TT}$ to   a random Fourier series defined on the Cantor group $\{-1, 1\}^\N$ which is almost surely uniformly convergent on $\{-1, 1\}^\N$:
\begin{align}\label{intro-X-hat}
\widehat{X}(\alpha; \theta): = \sum_{A \subset \N, \text{$A$ is finite}} a_A(\alpha) g_A \theta_A, \quad \text{$\theta = (\theta_i)_{i=1}^\infty \in \{-1, 1\}^\N$},
\end{align}
where $a_A(\alpha)$ is defined explicitly in terms of $\alpha$ and $(g_A)$ are independent standard Gaussian random variables and $\theta_A$ is the Walsh function:
\begin{align}\label{def-Walsh}
\theta_A = \prod_{i\in A} \theta_i, \quad \theta_\emptyset \equiv 1.
\end{align}

On the other hand, by Theorem \ref{thm-bdd-degree},  under the assumptions \eqref{degree-assumption} and  \eqref{def-Q-alpha}, the Gaussian process $(X(\TT, \alpha; \xi))_{\xi \in \partial \TT}$ has a   covariance kernel also given by \eqref{def-C-kernel}. However,  the uniform convergence of the random Fourier series  \eqref{intro-X-hat}  is different from the following uniform convergence in Theorem \ref{thm-bdd-degree}:
\[
\lim_{n\to\infty} \sup_{\xi \in \partial \TT} |X_n(\TT, \alpha; \xi) - X(\TT, \alpha; \xi)| = 0, a.s.
\]
Note also that, although \eqref{X-hat-bdd} implies that
\[
\sup_{\xi\in \partial \TT} |X(\TT, \alpha; \xi)|<\infty, a.s.,
\]
it is not clear whether this implies
\[
\sup_{n\in \N}\sup_{\xi\in \partial \TT} |X_n(\TT, \alpha; \xi)|<\infty, a.s.
\]
\end{remark}

Theorem \ref{thm-bdd-degree} implies the following

\begin{corollary}
  There exist two numerical constants $c_1, c_2>0$ such that  for any rooted tree $\TT$ satisfying the condition \eqref{degree-assumption} and any finite sequence $\alpha = (\alpha_k)_{k=1}^N$ of real numbers,
\[
c_1  \sqrt{\log D_{min}(\TT)}\cdot Q(\alpha) \le \Big(\E\Big[ \sup_{|v|\le N} G(\TT, \alpha; v)^2\Big]\Big)^{1/2} \le c_2  \sqrt{\log D_{max}(\TT)} \cdot Q(\alpha).
\]
\end{corollary}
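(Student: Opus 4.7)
The plan is to reduce the corollary to Theorem~\ref{thm-bdd-degree} by a zero-padding trick. Given a finite sequence $\alpha = (\alpha_k)_{k=1}^N$, I would set $\widetilde{\alpha}_k = \alpha_k$ for $1 \le k \le N$ and $\widetilde{\alpha}_k = 0$ for $k > N$, obtaining an infinite sequence with $\sum_{k=1}^\infty \widetilde{\alpha}_k^2 < \infty$ trivially. Since $\widetilde{\alpha}_k$ vanishes for $k > N$, the series defining $Q(\widetilde{\alpha})$ collapses to a finite sum, namely $Q(\widetilde{\alpha}) = \sum_{l=0}^{N-1} \bigl( \frac{1}{l+1} \sum_{k=l+1}^{N} \alpha_k^2 \bigr)^{1/2}$, which is the natural definition of $Q(\alpha)$ for a finite sequence and is automatically finite, so condition~(4) of Theorem~\ref{thm-bdd-degree} is satisfied for $\widetilde{\alpha}$.

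The next step is to identify $M^*(\TT, \widetilde{\alpha})$ with the finite-depth maximum on the left-hand side of the corollary. Because $\widetilde{\alpha}_k = 0$ for $k > N$, the truncations stabilize at level $N$: for every $n \in \N$ and $\xi \in \partial \TT$, one has $X_n(\TT, \widetilde{\alpha}; \xi) = G(\TT, \alpha; \pi_{\min(n,N)}(\xi))$. The standing assumption that $\TT$ is locally finite and has no leaves makes the projection $\pi_k : \partial \TT \to \{v \in \TT : |v| = k\}$ surjective for every $k$, since any vertex of depth $k$ can be extended to a rooted geodesic ray. Taking the supremum over $\xi \in \partial \TT$ and $n \in \N$, and recalling $G(\TT, \alpha; \rho) = 0$ by convention, yields the identity $M^*(\TT, \widetilde{\alpha}) = \sup_{|v| \le N} |G(\TT, \alpha; v)|$, and hence $\E[M^*(\TT, \widetilde{\alpha})^2] = \E[\sup_{|v| \le N} G(\TT, \alpha; v)^2]$.

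The argument is completed by invoking the two-sided bound \eqref{eqn::equivalent-sup-bis} of Theorem~\ref{thm-bdd-degree} applied to $\widetilde{\alpha}$ and substituting the two identities $Q(\widetilde{\alpha}) = Q(\alpha)$ and $\E[M^*(\TT, \widetilde{\alpha})^2] = \E[\sup_{|v| \le N} G(\TT, \alpha; v)^2]$ just established, which yields the advertised inequalities with the same numerical constants $c_1, c_2$. There is no genuine obstacle: the argument is a purely formal reduction exploiting the zero-padding and the surjectivity of the maps $\pi_k$. The only point requiring a moment of care is to confirm that the notation $Q(\alpha)$ in the corollary is consistent with the $Q$ of Theorem~\ref{thm-bdd-degree} when applied to the zero-extended sequence, which is immediate from the definition.
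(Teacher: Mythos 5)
Your zero-padding reduction is correct and is precisely the intended derivation: the paper states this corollary as an immediate consequence of Theorem \ref{thm-bdd-degree} without writing out a proof, and your argument (extending $\alpha$ by zeros, noting $Q(\widetilde{\alpha})$ collapses to the finite sum, identifying $M^*(\TT,\widetilde{\alpha})$ with $\sup_{|v|\le N}|G(\TT,\alpha;v)|$ via surjectivity of $\pi_k$, and invoking \eqref{eqn::equivalent-sup-bis}) supplies exactly the missing details. The convention $Q(\alpha^{(n)})=\sum_{l=0}^{n-1}\bigl(\frac{1}{l+1}\sum_{k=l+1}^{n}\alpha_k^2\bigr)^{1/2}$ you use for truncated sequences is the same one the paper employs in the proof of Proposition \ref{prop-eq-binary}, so there is nothing further to check.
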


Theorem \ref{thm-bdd-degree} combined with Remark \ref{Thm::Threeconditions} implies the following
\begin{corollary}[{Lifshits and Linde \cite[Theorem 2.3]{Russian-EJP}}]\label{Cor:russian}
Let $\TT = \TT_2$ be the binary tree. If the sequence $\alpha=(\alpha_{k})_{k\geq 1}$ is positive and non-increasing, then the Gaussian process \eqref{def-X-onT} is almost surely bounded if and only if
$
\sum_{k=1}^\infty \alpha_k<\infty.
$
\end{corollary}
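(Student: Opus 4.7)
The proof plan is to combine Theorem \ref{thm-bdd-degree} with the equivalences established in Remark \ref{Thm::Threeconditions} in an essentially tautological way, once the setting of the binary tree is verified to fit the hypotheses.

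First I would observe that $\TT = \TT_2$ is a rooted tree in which every vertex has exactly two children, so
\[
D_{min}(\TT_2) = D_{max}(\TT_2) = 2,
\]
and hence the standing assumption \eqref{degree-assumption} of Theorem \ref{thm-bdd-degree} is trivially satisfied. Moreover, the Gaussian process \eqref{def-X-onT} on $\TT_2$ is almost surely bounded if and only if $M^*(\TT_2,\alpha)<\infty$ a.s.; for a positive sequence $(\alpha_k)_{k\ge 1}$ the family $\{G(\TT_2,\alpha;v)\}_{v\in\TT_2}$ coincides with $\{X_n(\TT_2,\alpha;\xi)\}_{n,\xi}$ in absolute value along rooted geodesic rays, so the equivalence $(1)\Leftrightarrow(4)$ of Theorem \ref{thm-bdd-degree} applies directly.

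Next, I would invoke the main equivalence of Theorem \ref{thm-bdd-degree}: the almost sure boundedness of $G(\TT_2,\alpha;\cdot)$ is equivalent to $Q(\alpha)<\infty$, i.e.\ to condition (c-2) of Remark \ref{Thm::Threeconditions}. Under the assumption that $(\alpha_k)_{k\ge 1}$ is positive and non-increasing, Remark \ref{Thm::Threeconditions} asserts that the three conditions (c-1), (c-2) and (c-3) are equivalent. In particular, $Q(\alpha)<\infty$ is equivalent to condition (c-3), namely $\sum_{k=1}^\infty \alpha_k<\infty$. Chaining these two equivalences yields the corollary.

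Essentially nothing in this argument is substantive: the only step that is not a direct quotation is the monotone equivalence (c-2) $\Leftrightarrow$ (c-3), whose elementary justification is the content of the appendix referenced in Remark \ref{Thm::Threeconditions}. If one wished to spell it out, the forward direction uses the bound $\sum_{k=l+1}^\infty\alpha_k^2\le \alpha_{l+1}\sum_{k=l+1}^\infty\alpha_k$ together with the Cauchy--Schwarz inequality to estimate $Q(\alpha)$ by a constant multiple of $\sum_k\alpha_k$, while the reverse direction follows by comparing $\sum_k\alpha_k$ to the telescoping Abel sum $\sum_k k(\alpha_k-\alpha_{k+1})$ and noting that for non-increasing $(\alpha_k)$ with $\sum\alpha_k<\infty$ one has $k\alpha_k\to 0$. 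There is no genuine obstacle here; the corollary is really a specialization of Theorem \ref{thm-bdd-degree} to the case of the binary tree with monotone coefficients.
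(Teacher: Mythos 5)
Your proposal is correct and follows exactly the paper's route: the corollary is obtained by chaining the equivalence $(1)\Leftrightarrow(4)$ of Theorem \ref{thm-bdd-degree} (with $D_{min}(\TT_2)=D_{max}(\TT_2)=2$) with the equivalence of (c-2) and (c-3) for non-increasing sequences from Remark \ref{Thm::Threeconditions}, whose proof is the appendix's chain (c-3)$\Rightarrow$(c-1)$\Rightarrow$(c-2)$\Rightarrow$(c-3). The only caveat is your optional aside sketching a direct proof of (c-3)$\Rightarrow$(c-2): the bound $\sum_{k>l}\alpha_k^2\le\alpha_{l+1}\sum_{k>l}\alpha_k$ followed by Cauchy--Schwarz produces a factor of $\sum_k k\alpha_k$, which can diverge for summable non-increasing sequences (e.g.\ $\alpha_k=k^{-2}$), so one should rely on the appendix's detour through (c-1) as you in fact do.
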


Moreover, combined with \cite[Proposition 7.1]{Russian-EJP}, Theorem \ref{thm-bdd-degree} also implies the following
\begin{corollary}\label{Cor:two-weight}
 Let  $\varphi:\N\rightarrow \R$ be any function with
\[
\sum_{n=0}^{\infty}\Big(\frac{1}{n+1}\sum_{k=n+1}^{\infty} \varphi(k)^2 \Big)^{1/2}<\infty.
\]
Then for any  functions $\alpha(\cdot)$ and $\sigma(\cdot)$  in the definition \eqref{def-X-onT-two} satisfying
\[
\alpha(v) \sigma(v) = \varphi(|v|) \quad \text{for all $v\in \TT_2$},
\]
the Gaussian process defined by \eqref{def-X-onT-two} on the binary tree $\TT_2$ is almost surely bounded.
\end{corollary}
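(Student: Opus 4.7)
The plan is to combine Theorem \ref{thm-bdd-degree} with the two-weight to single-weight reduction provided by \cite[Proposition 7.1]{Russian-EJP}. The binary tree $\TT_2$ satisfies $D_{\min}(\TT_2)=D_{\max}(\TT_2)=2$, so the degree assumption \eqref{degree-assumption} is automatic. I would first apply Theorem \ref{thm-bdd-degree} to $\TT_2$ with the deterministic sequence $\alpha_k := \varphi(k)$: the standing hypothesis on $\varphi$ is, word for word, $Q(\varphi) < \infty$ in the notation of \eqref{def-Q-alpha}. The equivalence (4)$\Longleftrightarrow$(1) of Theorem \ref{thm-bdd-degree} therefore delivers the almost sure boundedness (and in fact the almost sure uniform convergence along all rooted geodesic rays) of the single-weight Gaussian process
\[
G(\TT_2,\varphi;v)\;=\;\sum_{u\pless v}\varphi(|u|)\,Z(u),\qquad v\in\TT_2.
\]

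Next I would invoke \cite[Proposition 7.1]{Russian-EJP}. In the present setting of the binary tree and under the depth-only factorization $\alpha(v)\sigma(v)=\varphi(|v|)$, this proposition supplies exactly the comparison needed: the almost sure boundedness of the two-weight Gaussian process $G(\TT_2,\alpha,\sigma;\cdot)$ defined by \eqref{def-X-onT-two} is a consequence of that of the associated single-weight process $G(\TT_2,\varphi;\cdot)$. Concatenating this reduction with the conclusion of the first step yields the corollary.

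The only real obstacle in this plan is the careful verification that the hypotheses of Proposition 7.1 of \cite{Russian-EJP} are met in the present formulation, and in particular that the implication we need (single-weight boundedness $\Longrightarrow$ two-weight boundedness under the product constraint $\alpha\sigma=\varphi\circ|\cdot|$) is the direction established there. All of the genuine probabilistic content of the corollary is carried by Theorem \ref{thm-bdd-degree}, our new necessary-and-sufficient criterion, together with the Lifshits-Linde two-weight reduction, which converts the two-weight question into the one-weight question on which Theorem \ref{thm-bdd-degree} directly operates.
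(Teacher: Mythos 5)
Your proposal coincides with the paper's own (implicit) argument: the paper derives Corollary \ref{Cor:two-weight} precisely by noting that the hypothesis on $\varphi$ is $Q(\varphi)<\infty$, so Theorem \ref{thm-bdd-degree} (with $D_{\min}(\TT_2)=D_{\max}(\TT_2)=2$) gives almost sure boundedness of the single-weight process, and then citing \cite[Proposition 7.1]{Russian-EJP} for the reduction from the two-weight process with $\alpha(v)\sigma(v)=\varphi(|v|)$ to that single-weight process. Your caveat about checking the direction of the implication in Proposition 7.1 is sensible, but the paper performs no further verification beyond the citation either, so your route is essentially identical to theirs.
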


Finally, we state the result in the case where the degrees of the vertices in our tree is not necessarily uniformly bounded.  For each $n\in \N$, define
\[
D_{min}^{(n)}(\TT): = \min_{v\in \TT \atop |v| = n} D(v), \quad D_{max}^{(n)}(\TT): =  \max_{v\in \TT \atop |v| =n} D(v).
\]

\begin{theorem}\label{thm-all-tree}
Let $\TT$ be a rooted tree such that
\begin{align}\label{eqn::degree-bound}
D_{min}(\TT)\ge 2  \an   \sup_{n\in \N}  \frac{ \log D_{max}^{(n)}(\TT)}{ \log D_{min}^{(n)}(\TT)}<\infty
\end{align}
and let  $\alpha = (\alpha_k)_{k=1}^\infty$ be a sequence of non-negative numbers with $\sum_{k=1}^\infty \alpha_k^2<\infty$.
Then the following conditions are equivalent:
\begin{itemize}
\item[(1)] $M^*(\TT, \alpha)$ is almost surely bounded;
\item[(2)] the following convergence holds:
\[
\lim_{N\rightarrow\infty}\mathbb{E}\Big[\sup_{n\geq N}\sup_{m\geq 0}\sup_{\xi\in\partial\TT}\Big|\sum_{k=n}^{n+m}\alpha_kZ(\pi_k(\xi))\Big|^2\Big]=0;
\]
\item[(3)] almost surely, the series \eqref{def-X-proc}
converges uniformly in $\xi \in \partial \TT$;
\item[(4)] the sequence $\alpha = (\alpha_k)_{k=1}^\infty$ satisfies
\begin{align}\label{eqn::degree-condition-finite}
 \sum_{l=0}^\infty  \Big(\sum_{k=l+1}^\infty \alpha_k^2\Big)^{1/2} \cdot \frac{\log D_{max}^{(l+1)}(\TT)}{\big(\log [\prod_{i=1}^{l+1}D_{max}^{(i)}(\TT)]\big)^{1/2}}<\infty.
\end{align}
\end{itemize}
\end{theorem}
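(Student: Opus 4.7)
The plan is to extend the strategy developed for Theorem \ref{thm-bdd-degree} by replacing the uniform entropy estimates with level-dependent ones that reflect the non-homogeneous growth of $\TT$. The key observation is that the assumption \eqref{eqn::degree-bound} forces $\log D_{min}^{(n)}(\TT) \asymp \log D_{max}^{(n)}(\TT)$ uniformly in $n$, so the upper and lower chaining estimates will differ only by a universal multiplicative constant depending on the supremum in \eqref{eqn::degree-bound}. The natural metric for the process $X(\TT, \alpha; \cdot)$ is the ultrametric $\rho_\alpha(\xi, \zeta) := (\sum_{k > |\xi \wedge \zeta|} \alpha_k^2)^{1/2}$, and its covering number at scale $d_l := (\sum_{k > l} \alpha_k^2)^{1/2}$ equals the number of level-$l$ vertices of $\TT$, bounded between $\prod_{i=1}^{l} D_{min}^{(i)}(\TT)$ and $\prod_{i=1}^{l} D_{max}^{(i)}(\TT)$.

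For the upper bound (4)$\Rightarrow$(1),(2),(3), I would apply Gaussian (ultrametric) chaining to obtain
\[
\Big(\E\big[M^*(\TT,\alpha)^2\big]\Big)^{1/2} \lesssim \sum_{l\ge 0} (d_l - d_{l+1}) \sqrt{\sum_{i=1}^{l} \log D_{max}^{(i)}(\TT)}.
\]
Abel summation combined with the elementary estimate $\sqrt{L_{l+1}} - \sqrt{L_{l}} \asymp \log D_{max}^{(l+1)}(\TT)/\sqrt{L_{l+1}}$, where $L_l := \sum_{i=1}^{l} \log D_{max}^{(i)}(\TT)$, then converts the right-hand side into a constant multiple of the series in \eqref{eqn::degree-condition-finite}.

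For the reverse implication (1)$\Rightarrow$(4), I would apply Sudakov minoration in conjunction with a Talagrand-type majorizing measure construction. At level $l+1$ one can select $\prod_{i=1}^{l+1} D_{min}^{(i)}(\TT)$ rays with pairwise $\rho_\alpha$-distance at least $d_l$ by picking one descendant ray per level-$(l+1)$ vertex; Sudakov then lower-bounds the Gaussian maximum at this scale by $d_l \sqrt{\sum_{i=1}^{l+1} \log D_{min}^{(i)}(\TT)}$. To upgrade this pointwise bound into the summed form \eqref{eqn::degree-condition-finite}, I would build a decreasing sequence of partitions of $\partial \TT$ by level-$l$ subtrees and run the majorizing-measure lower bound, in analogy with the proof of \eqref{eqn::equivalent-sup-bis} in Theorem \ref{thm-bdd-degree}; assumption \eqref{eqn::degree-bound} is exactly what makes the minimum and maximum entropy profiles comparable, closing the gap. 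The equivalences (1)$\Leftrightarrow$(2)$\Leftrightarrow$(3) follow from the It\^o--Nisio theorem together with standard Gaussian concentration, exactly as in the proof of Theorem \ref{thm-bdd-degree}.

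The principal obstacle is converting the scale-by-scale Sudakov lower bounds into the summed quantity \eqref{eqn::degree-condition-finite}: Sudakov yields a maximum rather than a sum, so one must carefully decompose $M^*(\TT, \alpha)$ into dyadic pieces relative to the non-uniform entropy profile $(L_l)_{l\ge 1}$ and then sum the resulting lower bounds using the ultrametric structure of $\partial \TT$. A secondary technical point is that Abel summation in the upper bound produces a boundary term of order $d_l \sqrt{L_l}$; under the combined hypotheses \eqref{eqn::degree-condition-finite} and \eqref{eqn::degree-bound}, this term tends to zero and is absorbed into the series, but this absorption must be verified explicitly to match the form of \eqref{eqn::degree-condition-finite}.
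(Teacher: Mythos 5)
Your route is genuinely different from the paper's. The paper does not run chaining or majorizing measures on $\partial\TT$ at all: its proof of Theorem \ref{thm-all-tree} is a one-line reduction to the proof of Theorem \ref{thm-bdd-degree}, which in turn sandwiches $\TT$ between the spherically homogeneous trees $\TT(\mathfrak{q})\hookrightarrow\TT\hookrightarrow\TT(\tilde{\mathfrak{q}})$ built from $D_{min}^{(n)}$ and $D_{max}^{(n)}$, reduces those to the binary tree via $\TT(2^{\mathfrak{l}^{\pm}})$ with a reindexed coefficient sequence $\alpha[\mathfrak{l}]$, and on $\TT_2$ invokes the Marcus--Pisier theorem for random Fourier series on the Cantor group (Theorem \ref{thm-pisier}) to get the two-sided bound, plus a reverse-submartingale/Doob argument for the statements about $M^*$ and the tail suprema; the only change for Theorem \ref{thm-all-tree} is replacing Lemma \ref{lem::equiv-q-q} by Lemma \ref{lem::equiv-q}, which is exactly your observation that \eqref{eqn::degree-bound} makes the min- and max-entropy profiles comparable. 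Your identification of the role of \eqref{eqn::degree-bound}, your Abel-summation reshaping of the Dudley sum into \eqref{eqn::degree-condition-finite} (equivalently $Q(\tilde{\mathfrak q};\alpha)$ of \eqref{def-Q-hat}), and your use of It\^o--Nisio for $(1)\Leftrightarrow(3)$ are all sound and arguably cleaner than the paper's corresponding steps.

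The genuine gap is the implication $(1)\Longrightarrow(4)$. Sudakov minoration applied scale by scale yields only $\sup_{l}\, d_l\big(\sum_{i\le l+1}\log D_{min}^{(i)}(\TT)\big)^{1/2}\lesssim \E\,M^*(\TT,\alpha)$, and this supremum can be finite while the series \eqref{eqn::degree-condition-finite} diverges (already for $\TT_2$ the gap between the Sudakov supremum and the Dudley sum is the whole content of the theorem). Upgrading the supremum to the sum requires the full Fernique--Talagrand minoration for ultrametric index sets, i.e.\ showing that \emph{no} majorizing measure does better than the level-uniform one on the minimal homogeneous subtree; you name this as ``the principal obstacle'' and propose ``a careful dyadic decomposition,'' but you do not carry it out, and this is precisely the step the paper outsources to Marcus--Pisier's lower bound (the explicit computation of the non-decreasing rearrangement $\overline{\sigma_\alpha}$ and the entropy integral $I(\sigma_\alpha)$ in Proposition \ref{T:T2}). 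A secondary unresolved point: your chaining bound is stated for the boundary process, but $(4)\Longrightarrow(1)$ and $(4)\Longrightarrow(2)$ concern $\sup_n\sup_\xi|X_n(\TT,\alpha;\xi)|$ and the tail suprema $\Sigma_N$; as the paper stresses in a Remark, boundedness of a continuous version with covariance $C_\alpha$ does not obviously control $M^*$, and some substitute for the paper's sub-/reverse-submartingale Doob argument (Proposition \ref{prop-eq-binary}) must be supplied to pass from $\E\sup_\xi|X|^2$ to $\E[(M^*)^2]$ and to \eqref{eqn-ssufficiency}.
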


\subsection{Data availability statement}
All data generated or analysed during this study are included in this published article.

\section{The case of the binary tree}

\subsection{The approach of Marcus-Pisier}
We will relate the uniform convergence of the random series \eqref{def-X-proc} to  the theory of uniform convergence of random Fourier series started with the works of Marcus and Pisier \cite{Pisier-book, Pisier-acta}. In this section, we present a suitable version of the main result in \cite{Pisier-book} which will be enough for our applications.

Let $G$ be a compact Abelian group with the Pontryagin dual group $\Gamma = \widehat{G}$, which is countable, see \cite[Chapter 2]{Rudin1962}.
Given a sequence of real numbers $(a_\gamma)_{\gamma\in \Gamma}$ with $\sum_{\gamma\in \Gamma}|a_\gamma|^2<\infty$, we consider a random Fourier series
\begin{align*}\label{E:random-fourier}
F(u)=\sum_{\gamma\in\Gamma}a_\gamma g_\gamma \gamma(u), \quad u\in G,
\end{align*}
where $(g_\gamma)_{\gamma\in \Gamma}$ are independent standard Gaussian random variables.
Let $\mu_G$ be the Haar probability measure on $G$.
For any $u\in G$, let
 \[
  \sigma(u)=\Big(\sum_{\gamma\in\Gamma}|a_\gamma|^2|\gamma(u)-1|^2\Big)^{1/2}.
\]
Recall that the non-decreasing rearrangement $\overline{\sigma}$ of the above function $\sigma$ is defined as follows: first, for any $s>0$, set
\[
m_\sigma(s)=\mu_G(\{u\in G:\sigma(u)<s\}).
\]
Then  $\overline{\sigma}$ is given by
\[
\overline{\sigma}(t)=\sup\{s>0:m_\sigma(s)<t\}, \qquad t\in [0,1].
\]
The entropy integral associated to the function $\sigma$ is defined by
\begin{align}\label{E:pisier-int}
I(\sigma)=&\int_{0}^{1}\frac{\overline{\sigma}(t)}
{t \sqrt{\log(4/t)}}dt \in [0, \infty].
\end{align}
\begin{theorem}[{Marcus-Pisier \cite[Theorem 1.1 \&  Theorem 1.4, Chapter I]{Pisier-book}}]\label{thm-pisier}
Let $G$ be a compact Abelian group with the Pontryagin dual group $\Gamma = \widehat{G}$. Then the random series
\[
\sum_{\gamma\in\Gamma}a_\gamma g_\gamma \gamma(u)
\]
  converges uniformly  almost surely if and only if $I(\sigma)<\infty$.
Furthermore, there exist two  constants $C_1,C_2>0$  (depending only on the group $G$) such that
 \[
C_1\Big[\Big(\sum_{\gamma\in\Gamma}|a_\gamma|^2\Big)^{1/2}+I(\sigma)\Big] \leq \Big( \mathbb{E}  \sup_{u\in G} \Big|\sum_{\gamma\in\Gamma}a_\gamma g_\gamma \gamma(u) \Big|^2 \Big)^{1/2}\leq C_2 \Big[\Big(\sum_{\gamma\in\Gamma}|a_\gamma|^2\Big)^{1/2}+I(\sigma)\Big].
 \]
\end{theorem}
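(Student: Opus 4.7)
The plan is to view the random Fourier series as a centered Gaussian process $F$ on $G$ whose intrinsic pseudometric
\[
d(u,v) := \big(\E|F(u)-F(v)|^2\big)^{1/2} = \sigma(u-v)
\]
is translation-invariant. The Marcus-Pisier point is precisely that this translation invariance lets one replace the usual metric entropy in Dudley's bound by a quantity built from the non-decreasing rearrangement $\overline{\sigma}$, and moreover that the resulting integral is sharp (not just an upper bound), thanks to the group structure.

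For the upper bound I would start from Dudley's entropy integral
\[
\Big(\E\sup_{u\in G}|F(u)|^2\Big)^{1/2} \;\lesssim\; \Big(\sum_{\gamma}|a_\gamma|^2\Big)^{1/2} + \int_0^{D} \sqrt{\log N(G,d,\epsilon)}\,d\epsilon,
\]
where $D$ is the $d$-diameter of $G$ and $N(G,d,\epsilon)$ is the $\epsilon$-covering number. By translation invariance, every $d$-ball of radius $\epsilon$ is a translate of $B(0,\epsilon)=\{u:\sigma(u)<\epsilon\}$, which has Haar measure $m_\sigma(\epsilon)$; a standard volume-packing argument on the compact group $G$ then gives $N(G,d,\epsilon)\asymp 1/m_\sigma(\epsilon)$. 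Substituting, performing the change of variable $t=m_\sigma(\epsilon)$ (so $d\epsilon = d\overline{\sigma}(t)$), and integrating by parts against the weight $\sqrt{\log(4/t)}$ converts Dudley's bound into a constant times $I(\sigma)$, as defined in \eqref{E:pisier-int}. The lower bound on the first term, $(\sum|a_\gamma|^2)^{1/2} = (\E|F(0)|^2)^{1/2} \le (\E\sup|F|^2)^{1/2}$, is immediate.

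For the nontrivial lower bound of order $I(\sigma)$, Sudakov minoration only gives $\E\sup|F|\gtrsim \sup_\epsilon \epsilon\sqrt{\log N(G,d,\epsilon)}$, which controls a single scale. To recover the integral one exploits translation invariance more strongly: iteratively picking a dyadic sequence of radii $\epsilon_k = 2^{-k}D$, one can build, by translating maximal $\epsilon_k$-separated sets inside $\epsilon_{k-1}$-balls, a hierarchical well-separated tree $T_k\subset G$ whose sizes grow like $1/m_\sigma(\epsilon_k)$. Applying Sudakov's inequality on each level and using Gaussian concentration to combine the levels (equivalently, constructing a majorizing measure along this tree and invoking Fernique's lower bound, or comparing with a branching Gaussian via Slepian), one obtains a lower bound that after the same change of variable matches $I(\sigma)$ up to a constant depending only on $G$.

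The main obstacle is this lower bound: Dudley's inequality is only one-sided in general, and the improvement in the stationary setting genuinely uses compactness of $G$ plus translation invariance of $d$, so Fernique's/Sudakov's methods must be set up along a structure that respects the group action. Finally, the equivalence between $I(\sigma)<\infty$ and a.s.\ uniform convergence of the random series follows from the two-sided $L^2$ bound combined with the It\^o-Nisio theorem, which upgrades almost sure boundedness of the partial sums in the uniform norm to almost sure uniform convergence for symmetric series of independent Banach-space-valued random variables.
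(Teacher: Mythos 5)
This statement is not proved in the paper at all: it is imported verbatim as a known result, with the proof delegated to Marcus--Pisier \cite[Chapter I]{Pisier-book}, so there is no in-paper argument to compare yours against. That said, your outline is a faithful reconstruction of the standard proof strategy. The upper bound via Dudley's entropy integral, the identification $N(G,d,\epsilon)\asymp 1/m_\sigma(\epsilon)$ by a volume-packing argument using translation invariance of the pseudometric $d(u,v)=\sigma(u-v)$, and the change of variables converting the entropy integral into $I(\sigma)$ are exactly the computations in Marcus--Pisier; likewise the use of the It\^o--Nisio theorem to pass from a two-sided moment bound to almost sure uniform convergence of the series is the standard final step.

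The one place where your proposal is materially thinner than an actual proof is the lower bound of order $I(\sigma)$. You correctly identify it as the crux, but the sketch ``apply Sudakov at each dyadic scale and combine the levels by concentration'' is precisely the step that fails for general Gaussian processes (otherwise Dudley's bound would always be two-sided), and the mechanism by which stationarity rescues it is nontrivial: one needs Fernique's minoration for stationary processes, which constructs, using the group structure, a family of translates on which the process dominates a direct sum of independent blocks (via Slepian/Gordon comparison), and only then do the scales add up to the full integral. Your text gestures at all the right ingredients (translated separated nets, majorizing measures, Slepian comparison with a branching Gaussian) but does not actually carry out the comparison inequality that makes the levels combine, so as written the lower half of the two-sided estimate --- and hence the necessity of $I(\sigma)<\infty$ --- is asserted rather than proved. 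Since the paper itself simply cites Marcus--Pisier for exactly this, your proposal is an acceptable summary of the cited proof rather than a self-contained alternative.
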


\subsection{Gaussian processes on the boundary of the binary tree}
The rooted binary tree $\TT_2$ is naturally identified with
\begin{align}\label{T-sq-unio}
\TT_2 \simeq \mathcal{D}^{\circ}  = \bigsqcup_{n= 0}^\infty\{-1, 1\}^n,
\end{align}
with convention
$
\{-1, 1\}^0 = \{\rho = \text{the root vertex of $\TT_2$}\}.
$
Using the above identification \eqref{T-sq-unio}, the boundary  $\partial \TT_2$ is naturally identified with
\[
\partial \TT_2 \simeq \mathcal{D}=\{-1,1\}^\mathbb{N}.
\]
For any $\theta\in \mathcal{D}$ and any $k\in \N$, we define
\[
\theta^{(k)}: = (\theta_1, \cdots, \theta_k)\in \{-1,1\}^k \subset \mathcal{D}^{\circ}.
\]

With the product group structure and the product topology, $\mathcal{D} = \{-1,1\}^\mathbb{N}$ is a compact Abelian group, usually called  the Cantor group. The Haar probability measure on $\mathcal{D}$ is denoted by $\mu_{\mathcal{D}}$ and the identity element of the  group $\mathcal{D}$ is denoted by
\begin{align}\label{not-varpi}
\varpi=(1,1,\cdots,1,\cdots).
\end{align}
The Pontryagin dual group $\widehat{\mathcal{D}}$ (see \cite[Chapter 2]{Rudin1962}) of the Cantor group $\mathcal{D}$ is described as follows:  set
\[
\mathcal{F}(\N): = \Big\{A\subset \N\Big| \text{$A$ is finite}\Big\}.
\]
Recall the definitions of the Walsh functions  $\theta_A$ in \eqref{def-Walsh} for all $A\in \mathcal{F}(\N)$.  We have
\[
\widehat{\mathcal{D}} = \Big\{\theta_A\Big|A\in \mathcal{F}(\N)\Big\}.
\]

Let $(Z(v))_{v\in \mathcal{D}^{\circ}}$ be a sequence of independent standard Gaussian random variables.  Let  $(\alpha_k)_{k\geq 1}$ be a sequence of real numbers with
$\sum_{k=1}^\infty \alpha_k^2 <\infty$.  Clearly, for any fixed $\theta \in \mathcal{D}$ or for $\theta$ in a fixed countable subst of $\mathcal{D}$, we can define
\begin{align}\label{E:randomfunction}
X(\alpha; \theta): =\sum_{k=1}^\infty \alpha_k Z(\theta^{(k)}),
\end{align}
where the limit is understood in the sense of $L^2$-convergence. The above definition \eqref{E:randomfunction} does not always give a Gaussian process on $\mathcal{D}$ since  we do not know whether almost surely, the series $X(\alpha; \theta)$ is convergent for all $\theta\in \mathcal{D}$.

For any two distinct $\theta, \eta \in \mathcal{D}$, write
\[
\theta\wedge \eta: = \begin{cases}
\rho &\text{if $\theta_1 \ne \eta_1$}
\\
\theta^{(k)} & \text{if $\theta_i= \eta_i$ for $1\le i \le k$ and $\theta_{k+1} \ne \eta_{k+1}$.}
\end{cases}
\]
Given any number sequence $\alpha = (\alpha_k)_{k=1}^\infty$ with $\sum_{k=1}^\infty \alpha_k^2<\infty$, define a kernel by
\begin{align}\label{def-C-theta}
C_\alpha(\theta, \eta):= \sum_{k  = 0}^{|\theta\wedge \eta|} \alpha_k^2,  \quad \theta, \eta \in \mathcal{D},
\end{align}
with convention $\alpha_0  = 0$ and $|\theta \wedge \theta| = \infty$ for all $\theta \in \mathcal{D}$.

\begin{lemma}\label{lem-pd}
For any number sequence $\alpha = (\alpha_k)_{k=1}^\infty$ with $\sum_{k=1}^\infty \alpha_k^2<\infty$, the kernel $C_\alpha(\cdot, \cdot)$ defined by \eqref{def-C-theta} is non-negative definite.
\end{lemma}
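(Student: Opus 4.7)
The plan is to recognize $C_\alpha$ as a covariance kernel of a genuine Gaussian process indexed by any prescribed countable subset of $\mathcal{D}$; positive semidefiniteness of covariance matrices then gives the result for free.

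More precisely, fix any finite family $\theta^1,\ldots,\theta^N\in\mathcal{D}$. For each $i$, the vertices $\{(\theta^i)^{(k)}\}_{k\ge 1}$ along the ray $\theta^i$ are pairwise distinct, so the random variables $\{Z((\theta^i)^{(k)})\}_{k\ge 1}$ are independent standard Gaussians. Combined with the assumption $\sum_{k=1}^\infty\alpha_k^2<\infty$, this makes
\[
Y_i:=\sum_{k=1}^\infty \alpha_k Z((\theta^i)^{(k)})
\]
a well-defined centered Gaussian random variable in $L^2$, with the series converging in $L^2$. I would then compute
\[
\mathrm{Cov}(Y_i,Y_j)=\sum_{k,l\ge 1}\alpha_k\alpha_l\, \E[Z((\theta^i)^{(k)})Z((\theta^j)^{(l)})].
\]
The expectation equals $1$ when $(\theta^i)^{(k)}=(\theta^j)^{(l)}$ and $0$ otherwise; since the depth function $|\cdot|$ is preserved under equality of vertices, this forces $k=l$, and then the identity $(\theta^i)^{(k)}=(\theta^j)^{(k)}$ is equivalent to $k\le|\theta^i\wedge\theta^j|$. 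Therefore
\[
\mathrm{Cov}(Y_i,Y_j)=\sum_{k=1}^{|\theta^i\wedge\theta^j|}\alpha_k^2=C_\alpha(\theta^i,\theta^j),
\]
using $\alpha_0=0$ in the convention of \eqref{def-C-theta}.

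Because the $N\times N$ covariance matrix $(\mathrm{Cov}(Y_i,Y_j))_{i,j}$ is automatically non-negative definite, so is $(C_\alpha(\theta^i,\theta^j))_{i,j}$. Since the family $\theta^1,\ldots,\theta^N$ was arbitrary, this proves that $C_\alpha$ is non-negative definite on $\mathcal{D}$. There is no real obstacle here; the only mild subtlety is to justify that $Y_i$ is defined as an $L^2$-limit (rather than a pointwise sum, which is precisely the delicate question taken up by Theorem \ref{thm-bdd-degree}). An equivalent, more abstract route would be to write $C_\alpha(\theta,\eta)=\sum_{k=1}^\infty\alpha_k^2\,\ch[(\theta^{(k)}=\eta^{(k)})]$ and observe that each indicator is the Gram kernel of characteristic functions of cylinder sets, hence non-negative definite, and that the class of non-negative definite kernels is closed under non-negative linear combinations and pointwise limits; I prefer the probabilistic argument because it foreshadows the Gaussian process appearing in \eqref{E:randomfunction}.
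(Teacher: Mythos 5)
Your proposal is correct and follows essentially the same route as the paper: both identify $C_\alpha$ restricted to a finite set of rays as the covariance matrix of the Gaussian vector obtained from the $L^2$-convergent series $\sum_k \alpha_k Z(\theta^{(k)})$, whence non-negative definiteness is automatic. You merely spell out the covariance computation (which the paper defers to the proof of Lemma \ref{lem-X-and-hat}) in more detail, and that computation is carried out correctly.
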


\begin{proof}
It suffices to show that for any finite subset $S\subset \mathcal{D}$, the finite square matrix $C_\alpha|_{S\times S}$ is non-negative definite. Fix any finite subset $S\subset \mathcal{D}$ and note that  the series \eqref{E:randomfunction} is $L^2$-convergent for  all $\theta \in S$. Then  the matrix $C_\alpha|_{S\times S}$ is the covariance matrix of the Gaussian vector $(X(\alpha; \theta))_{\theta \in S}$ and  thus is non-negative definite.
\end{proof}

For any $A\in \mathcal{F}(\N)$, set
\[
m_A = \max \Big\{ n\Big| n \in A \cup \{0\}\Big\}
\]
and
\begin{align}\label{def-a-A}
a_A(\alpha): =\Big(\sum_{k=1}^\infty 2^{-k}\alpha_k^2\mathds{1}(k\geq m_A)\Big)^{1/2}.
\end{align}

\begin{lemma}\label{lem-2-sum}
Let  $(\alpha_k)_{k\geq 1}$ be a number sequence with
$\sum_{k=1}^\infty \alpha_k^2 <\infty$. Then
\begin{align}\label{a-alpha-2sum}
\sum_{A\in \mathcal{F}(\N)} a_A(\alpha)^2 = \sum_{k = 1}^\infty \alpha_k^2.
\end{align}
\end{lemma}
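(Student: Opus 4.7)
The plan is to swap the order of summation between the index $A\in\mathcal{F}(\N)$ and the index $k\in\N$, and then count the finite subsets $A$ with a prescribed upper bound on $m_A$.

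First I would unfold the definition, writing
\[
\sum_{A\in \mathcal{F}(\N)} a_A(\alpha)^2 = \sum_{A\in \mathcal{F}(\N)} \sum_{k=1}^\infty 2^{-k}\alpha_k^2\,\mathds{1}(k\geq m_A).
\]
Since every term is non-negative, Tonelli's theorem applies and I may interchange the two sums to obtain
\[
\sum_{A\in \mathcal{F}(\N)} a_A(\alpha)^2 = \sum_{k=1}^\infty 2^{-k}\alpha_k^2 \cdot \#\Big\{A\in \mathcal{F}(\N)\,\Big|\, m_A\le k\Big\}.
\]

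Next I would identify the counting factor. By the definition $m_A=\max(A\cup\{0\})$, the condition $m_A\le k$ is equivalent to $A\subseteq\{1,2,\dots,k\}$, with the empty set included (corresponding to $m_\emptyset=0$). Hence the cardinality in question is exactly $2^k$. Substituting this back yields
\[
\sum_{A\in \mathcal{F}(\N)} a_A(\alpha)^2 = \sum_{k=1}^\infty 2^{-k}\alpha_k^2 \cdot 2^k = \sum_{k=1}^\infty \alpha_k^2,
\]
which is the desired identity.

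The argument is entirely bookkeeping, so there is no real obstacle; the only small point to be careful about is that the convention $\alpha_0=0$ and the inclusion of $A=\emptyset$ (with $m_\emptyset=0$) are consistent, and that the application of Tonelli is justified by non-negativity together with the finiteness of $\sum_k \alpha_k^2$.
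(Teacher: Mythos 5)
Your proof is correct and follows essentially the same route as the paper: both interchange the two sums (justified by non-negativity) and use the count $\#\{A\in\mathcal{F}(\N)\mid m_A\le k\}=\#\{A\subseteq\{1,\dots,k\}\}=2^k$ to cancel the factor $2^{-k}$. Your explicit mention of Tonelli and of the convention $m_\emptyset=0$ is a welcome clarification but does not change the argument.
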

\begin{proof}
Note that for any integer $k\ge 1$, we have
\[
\sum_{A\in \mathcal{F}(\N)}  \mathds{1}(k\ge m_A)=  \sum_{A \subset \{1, 2, \dots, k \}}1 = 2^k.
\]
Therefore,
\begin{align*}
\sum_{A\in \mathcal{F}(\N)} a_A(\alpha)^2 =  & \sum_{A\in \mathcal{F}(\N)} \sum_{k=1}^\infty 2^{-k}\alpha_k^2\mathds{1}(k\geq m_A) = \sum_{k = 1}^\infty 2^{-k} \alpha_k^2  \sum_{A\in \mathcal{F}(\N)}  \mathds{1}(k\ge m_A)
=  \sum_{k=1}^\infty \alpha_k^2.
\end{align*}
This completes the proof of the lemma.
\end{proof}

Let $(g_A)_{A\in \mathcal{F}(\N)}$ be a sequence of independent standard Gaussian random variables. By Lemma \ref{lem-2-sum},  for any fixed $\theta\in \mathcal{D}$ or for $\theta$ in a fixed countable subset of $\mathcal{D}$, we can define
\begin{align}\label{def-X-hat}
\widehat{X}(\alpha; \theta): = \sum_{A\in\mathcal{F}(\N)} a_A(\alpha) g_A \theta_A.
\end{align}
A priori,  we only know that the random variables $\widehat{X}(\alpha; \theta)$ is  well-defined simultaneously for $\theta$ in a fixed countable subset of $\mathcal{D}$.

\begin{lemma}\label{lem-X-and-hat}
For any fixed countable subset $S\subset \mathcal{D}$, the Gaussian process $(\widehat{X}(\alpha; \theta))_{\theta \in S}$ shares the same law as the Gaussian process $(X(\alpha; \theta))_{\theta \in S}$.
\end{lemma}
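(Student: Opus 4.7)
The plan is to reduce the statement to matching covariance kernels, since both $(\widehat{X}(\alpha;\theta))_{\theta\in S}$ and $(X(\alpha;\theta))_{\theta\in S}$ are manifestly centered Gaussian processes. Because $S$ is countable, the laws are determined by their finite-dimensional marginals, and finite-dimensional Gaussian laws are determined by mean and covariance. Hence it suffices to show
\[
\mathbb{E}\bigl[\widehat{X}(\alpha;\theta)\widehat{X}(\alpha;\eta)\bigr]=C_\alpha(\theta,\eta)=\sum_{k=1}^{|\theta\wedge\eta|}\alpha_k^2
\]
for every $\theta,\eta\in S$, since the right-hand side is already, by a direct computation using independence of the $Z(v)$'s, the covariance of $X(\alpha;\theta)$ and $X(\alpha;\eta)$.

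For the $\widehat{X}$-side, I would expand using independence of $(g_A)_{A\in\fn}$:
\[
\mathbb{E}\bigl[\widehat{X}(\alpha;\theta)\widehat{X}(\alpha;\eta)\bigr]=\sum_{A\in\fn} a_A(\alpha)^2\,\theta_A\eta_A =\sum_{A\in\fn} a_A(\alpha)^2\prod_{i\in A}\epsilon_i,
\]
where $\epsilon_i:=\theta_i\eta_i\in\{-1,1\}$. Substituting the definition \eqref{def-a-A} and interchanging the two sums (justified by absolute convergence via Lemma \ref{lem-2-sum}) yields
\[
\sum_{A\in\fn} a_A(\alpha)^2\prod_{i\in A}\epsilon_i=\sum_{k=1}^\infty 2^{-k}\alpha_k^2\sum_{A\subset\{1,\dots,k\}}\prod_{i\in A}\epsilon_i=\sum_{k=1}^\infty 2^{-k}\alpha_k^2\prod_{i=1}^k(1+\epsilon_i).
\]
The key observation is now purely combinatorial: $\prod_{i=1}^k(1+\epsilon_i)$ equals $2^k$ when $\theta_i=\eta_i$ for all $i\le k$ (i.e.\ when $k\le |\theta\wedge\eta|$) and vanishes otherwise because some factor $1+\epsilon_i$ is zero. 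Thus the sum collapses to $\sum_{k=1}^{|\theta\wedge\eta|}\alpha_k^2$, which matches $C_\alpha(\theta,\eta)$.

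The only mildly delicate point is making sense of $\widehat{X}(\alpha;\theta)$ as an $L^2$-convergent series simultaneously with the covariance computation; this is where Lemma \ref{lem-2-sum} is used, both to ensure convergence of $\widehat{X}(\alpha;\theta)$ in $L^2$ for each fixed $\theta$ and to justify Fubini when summing the double series. Once the covariance identity is established, an appeal to the Kolmogorov consistency theorem (or simply the fact that any finite sub-collection of the two processes are jointly Gaussian with matching means and covariances) finishes the proof on any countable $S\subset\mathcal{D}$.
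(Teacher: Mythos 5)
Your proposal is correct and follows essentially the same route as the paper: reduce to the covariance identity, expand $\mathbb{E}[\widehat{X}(\alpha;\theta)\widehat{X}(\alpha;\eta)]$ via independence of the $g_A$, interchange the sums using Lemma \ref{lem-2-sum}, and collapse $\sum_{A\subset\{1,\dots,k\}}(\theta\eta)_A=\prod_{i=1}^k(1+\theta_i\eta_i)$ to the indicator $\mathds{1}(|\theta\wedge\eta|\ge k)$, exactly as in the paper's computation \eqref{C-A-sum}. The only cosmetic difference is that you spell out the Fubini justification and the $X$-side covariance computation, which the paper leaves implicit.
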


\begin{proof}
It suffices to show that for any two elements $\theta, \eta \in S \subset \mathcal{D}$, we have
\begin{align}\label{X-X-hat-cov}
 \E[X(\alpha; \theta) X(\alpha; \eta)]  = \E[\widehat{X}(\alpha; \theta) \widehat{X}(\alpha; \eta)] = C_\alpha(\theta, \eta).
\end{align}
Let us only show the second equality. First, for any integer $k\ge 1$, we have
\begin{align*}
\sum_{A\in \mathcal{F}(\N)} \mathds{1}(k\geq m_A)   \theta_A = \sum_{A\subset \{1,2,\dots, k\}}  \theta_A  =  \prod_{i=1}^k (1 + \theta_i)
\end{align*}
and
\[
\prod_{i=1}^k \frac{1+\theta_i \eta_i}{2} = \mathds{1} (\theta^{(k)} = \eta^{(k)}) = \mathds{1}(| \theta\wedge \eta|\ge k).
\]
Therefore,
\begin{align}\label{C-A-sum}
\begin{split}
\sum_{A\in \mathcal{F}(\N)} a_A(\alpha)^2 \theta_A \eta_A &  =\sum_{A\in \mathcal{F}(\N)}  \sum_{k=1}^\infty 2^{-k}\alpha_k^2\mathds{1}(k\geq m_A)   (\theta \eta)_A
\\
& =  \sum_{k=1}^\infty 2^{-k}\alpha_k^2  \sum_{A\in \mathcal{F}(\N)} \mathds{1}(k\geq m_A)   (\theta \eta)_A
\\
&=  \sum_{k=1}^\infty \alpha_k^2 \cdot \prod_{i=1}^k \frac{1 + \theta_i\eta_i}{2} = \sum_{k=1}^\infty \alpha_k^2 \mathds{1}(|\theta\wedge \eta| \ge k)
\\
& = \sum_{k=1}^{|\theta \wedge \eta|}\alpha_k^2 = C_\alpha(\theta, \eta).
\end{split}
\end{align}
It follows that
\begin{align*}
 \E[\widehat{X}(\alpha; \theta) \widehat{X}(\alpha; \eta)]  & =  \E \Big[ \sum_{A\in\mathcal{F}(\N)} a_A(\alpha) g_A \theta_A \sum_{B \in\mathcal{F}(\N)} a_B(\alpha) g_B \eta_B\Big]
\\
 & = \sum_{A\in \mathcal{F}(\N)} a_A(\alpha)^2 \theta_A \eta_A  = C_\alpha(\theta, \eta).
\end{align*}
This completes the proof of the lemma.
\end{proof}

Recall the definition \eqref{def-Q-alpha} for $Q(\alpha)$:
\[
Q(\alpha): = \sum_{l=0}^{\infty}\Big(\frac{1}{l+1} \sum_{k=l+1}^{\infty}\alpha_k^2\Big)^{1/2} \in [0, \infty].
\]

\begin{proposition}\label{T:T2}
Let  $(\alpha_k)_{k\geq 1}$ be a sequence of real numbers with
$\sum_{k=1}^\infty a_k^2 <\infty$. Then  $C_\alpha(\cdot, \cdot)$ is the covariance kernel of a continuous Gaussian process on $\mathcal{D}$
if and only if
\[
Q(\alpha) <\infty.
\]
In such situation, the random series $\widehat{X}(\alpha; \theta)$  defined in  \eqref{def-X-hat} is almost surely convergent uniformly in $\theta\in \mathcal{D}$ and therefore is a realization of a continuous Gaussian process with c covariance kernel $C_\alpha(\cdot, \cdot)$.  Moreover, there exist two numerical constants $c_1, c_2>0$ such that
\begin{align}\label{2-side-2norm}
c_1 Q(\alpha) \le  \sqrt{\E\Big[  \sup_{\theta\in \mathcal{D}} |\widehat{X}(\alpha; \theta)|^2\Big]} \le c_2 Q(\alpha).
\end{align}
\end{proposition}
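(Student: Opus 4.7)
The plan is to apply the Marcus--Pisier theorem (Theorem~\ref{thm-pisier}) to the Cantor group $\mathcal{D}=\{-1,1\}^{\N}$, whose Pontryagin dual is $\{\theta_A : A\in\mathcal{F}(\N)\}$, with the square summable coefficients $(a_A(\alpha))_{A\in\mathcal{F}(\N)}$. The random Fourier series in question is then precisely $\widehat{X}(\alpha;\theta)$. The first step is to compute the associated Marcus--Pisier function $\sigma$ explicitly. Since $u_A\in\{-1,1\}$ yields $|u_A-1|^2=2(1-u_A)$,
\[
\sigma(u)^2=\sum_{A\in\mathcal{F}(\N)} a_A(\alpha)^2|u_A-1|^2=2\sum_{A\in\mathcal{F}(\N)}a_A(\alpha)^2-2\sum_{A\in\mathcal{F}(\N)}a_A(\alpha)^2 u_A.
\]
Applying Lemma~\ref{lem-2-sum} to the first sum and the identity \eqref{C-A-sum} (with $\theta=u,\,\eta=\varpi$) to the second gives the key formula
\[
\sigma(u)^2=2\sum_{k=|u\wedge\varpi|+1}^{\infty}\alpha_k^2.
\]

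The second step is to identify the non-decreasing rearrangement $\bar\sigma$. Setting $\beta_l:=\sqrt{2\sum_{k\geq l+1}\alpha_k^2}$, the function $\sigma(u)=\beta_{|u\wedge\varpi|}$ depends only on the initial run of $1$'s in $u$; the level set $\{u:|u\wedge\varpi|=l\}$ has $\mu_{\mathcal{D}}$-mass $2^{-(l+1)}$, the sequence $(\beta_l)_{l\ge 0}$ is non-increasing, and $\beta_l\to 0$. Unpacking the definition of $\bar\sigma$ on the dyadic decomposition $(0,1]=\bigsqcup_{l\ge 0}(2^{-(l+1)},2^{-l}]$ then shows
\[
\bar\sigma(t)=\beta_l\qquad\text{for }t\in (2^{-(l+1)},\,2^{-l}].
\]

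The entropy integral now reduces to a geometric-type sum. On the dyadic interval $(2^{-(l+1)},2^{-l}]$ one has $\log(4/t)\asymp l+1$ and $\int t^{-1}\,dt=\log 2$, hence
\[
I(\sigma)=\sum_{l=0}^{\infty}\int_{2^{-(l+1)}}^{2^{-l}}\frac{\beta_l}{t\sqrt{\log(4/t)}}\,dt\;\asymp\;\sum_{l=0}^{\infty}\frac{\beta_l}{\sqrt{l+1}}\;\asymp\;Q(\alpha),
\]
the last equivalence being the very definition \eqref{def-Q-alpha} of $Q(\alpha)$ up to the absolute constant $\sqrt{2}$.

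Finally, both assertions of the proposition follow from Theorem~\ref{thm-pisier}. The equivalence $I(\sigma)<\infty\Leftrightarrow Q(\alpha)<\infty$ gives almost sure uniform convergence of $\widehat{X}(\alpha;\cdot)$ under $Q(\alpha)<\infty$, and then Lemma~\ref{lem-X-and-hat} identifies the uniform limit as a continuous Gaussian process with covariance kernel $C_\alpha$. Conversely, if some continuous Gaussian process on $\mathcal{D}$ has covariance kernel $C_\alpha$, then evaluating it on a fixed countable dense subset and invoking Lemma~\ref{lem-X-and-hat} shows that the supremum of $\widehat{X}(\alpha;\cdot)$ over that dense set is almost surely finite, whence the Marcus--Pisier lower bound forces $I(\sigma)<\infty$, i.e.\ $Q(\alpha)<\infty$. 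The two-sided bound \eqref{2-side-2norm} follows from the two-sided Marcus--Pisier norm inequalities and the trivial estimate $\|\alpha\|_{\ell^2}\le Q(\alpha)$ (the $l=0$ term of \eqref{def-Q-alpha}). The only non-routine step is the explicit identification of $\sigma$ and its rearrangement; everything else is a bookkeeping exercise.
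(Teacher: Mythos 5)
Your proposal is correct and follows essentially the same route as the paper: apply Theorem~\ref{thm-pisier} to the Cantor group, reduce $\sigma$ to a function of $|u\wedge\varpi|$ via \eqref{C-A-sum}, compute the rearrangement on the dyadic decomposition, and identify $I(\sigma)\asymp Q(\alpha)$. The only place the paper spends more effort is in verifying the formula $\bar\sigma(t)=\beta_l$ on $(2^{-(l+1)},2^{-l}]$ when $(\beta_l)$ is merely non-increasing rather than strictly decreasing (it first treats the strictly decreasing case and then passes to the support of $\alpha$); your one-line ``unpacking'' is correct but would benefit from that check.
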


\begin{remark}
Proposition \ref{T:T2} does not imply directly that the series \eqref{E:randomfunction} is almost surely uniformly convergent in $\theta$.
\end{remark}

\begin{proof}[Proof of Proposition \ref{T:T2}]
Define
\[
\sigma_\alpha(\theta)^2=\sum_{A\in\mathcal{F}(\N)}a_A(\alpha)^{2}
|\theta_A-1|^{2}, \quad \theta \in \mathcal{D}.
\]
By  Theorem \ref{thm-pisier} and Lemma \ref{lem-2-sum}, we only need to show that there exist two numerical constants $c, c'>0$ such that
\begin{align}\label{goal-Q-I}
c Q(\alpha)\le   I(\sigma_\alpha) + \Big(\sum_{A\in \mathcal{F}(\N)} a_A(\alpha)^2\Big)^{1/2}  \le c' Q(\alpha).
\end{align}
Recall the notation \eqref{not-varpi} for $\varpi$.  Using the definition \eqref{def-a-A} for $a_A(\alpha)$, Lemma \ref{lem-2-sum} and the equality \eqref{C-A-sum},  we have
\begin{align*}
\sigma_\alpha(\theta)^2 =& \sum_{A\in\mathcal{F}(\N)}a_A(\alpha)^{2}
|\theta_A-1|^{2}=  2\sum_{A\in\mathcal{F}(\N)} a_A(\alpha)^2(1-\theta_A)\\
=&2\sum_{k=1}^{\infty}\alpha_k^2-2\sum_{k=1}^{\infty}\alpha_k^2\mathds{1}(|\theta\wedge \varpi|\geq k)
=2\sum_{k=1}^{\infty}\alpha_k^2\mathds{1}(|\theta\wedge \varpi|<k)\\
=&2\sum_{k=1}^{\infty}\alpha_k^2\sum_{l=0}^{k-1}\mathds{1}(|\theta\wedge \varpi|=l)
=\sum_{l=0}^{\infty}\mathds{1}(|\theta\wedge \varpi|=l)\cdot\sum_{k=l+1}^{\infty}\alpha_k^2.
\end{align*}
By setting
\begin{align}\label{def-Q-l}
Q_l(\alpha):=\Big(\sum_{k=l+1}^{\infty}\alpha_k^2\Big)^{1/2}, \quad l \ge 0,
\end{align}
we obtain
\[
\sigma_\alpha(\theta)=\sum_{l=0}^{\infty}\mathds{1}(|\theta\wedge \varpi|=l)Q_l(\alpha).
\]
Observe that $Q_l(\alpha)$ is non-increasing in $l$. Let us first assume that $Q_l(\alpha)$ is strictly decreasing in $l$.  For any $s\in(Q_{l+1}(\alpha),Q_l(\alpha)]$, the inequality $\sigma_\alpha(\theta)<s$  holds if and only if $|\theta\wedge \varpi|\geq l+1$. Therefore, for any $s\in (Q_{l+1}(\alpha), Q_l(\alpha)]$, we have
\begin{align*}
m_{\sigma_\alpha}(s) = & \mu_{\mathcal{D}}\Big(\Big\{\theta\in \mathcal{D} \big| \sigma(\theta)<s\Big\}\Big)= \mu_{\mathcal{D}}\Big(\Big\{\theta\in \mathcal{D} \big| | \theta\wedge \varpi | \ge l + 1\Big\}\Big)
\\
= & \mu_{\mathcal{D}}\Big(\Big\{\theta\in \mathcal{D} \big| \theta_1 = \cdots = \theta_l = 1 \an \theta_{l+1} = -1\Big\}\Big)= 2^{-l-1}.
\end{align*}
Thus
\[
m_{\sigma_\alpha}(s) = \mathds{1}(s> Q_0(\alpha)) + \sum_{l=0}^\infty 2^{-l-1} \mathds{1}(Q_{l+1}(\alpha)< s \le Q_l(\alpha)).
\]
It follows that the non-decreasing rearragement $\overline{\sigma_\alpha}$  is given as follows: for any $t\in [0, 1]$, we have
\begin{align*}
\overline{\sigma_\alpha}(t)=\sup \left\{y \mid m_{\sigma_\alpha}(y)<t\right\}
=\sum_{l=0}^{\infty}Q_l(\alpha) \cdot \mathds{1}(2^{-l-1}<t\leq 2^{-l}).
\end{align*}
Now we can compute the associated entropy integral $I(\sigma_\alpha)$:
\begin{align}\label{eq-I-sigma}
I(\sigma_\alpha)=&\int_{0}^{1}\frac{\overline{\sigma}(t)}
{t \sqrt{\log(4/t)}}dt
=\sum_{l=0}^{\infty}Q_l(\alpha)\int_{2^{-l-1}}^{2^{-l}}
\frac{dt}{t\sqrt{\log(4/t)}}
=\sum_{l=0}^{\infty}\frac{\sqrt{\log 2}}{\sqrt{l+3}+\sqrt{l+2}}Q_l(\alpha).
\end{align}
Next, we pass to the general situation where $Q_l(\alpha)$ is not necessarily strictly decreasing. Clearly, we may assume that $S = \{k\in \N: \alpha_k>0\} \ne \emptyset$. Write
\[
S = \{k\in \N: \alpha_k>0\} = \{n_l\}_{l=0}^N,
\]
where $N$ can be either finite or $N= \infty$ and $n_0<n_1<\cdots$. Using the above notation, for any $n_l\le k< n_{l+1}-1$, we have $Q_k(\alpha)= Q_{n_l}(\alpha)$ and hence
\[
\sigma_{\alpha}(\theta)= \sum_{l=0}^{N-1}  Q_{n_l}(\alpha) \mathds{1}( n_l \le  | \theta \wedge \varpi|< n_{l+1}-1).
\]
Since $Q_{n_l}(\alpha)$ is strictly decreasing in $l$, for any $s\in (Q_{n_{l+1}}(\alpha),Q_{n_l}(\alpha)]$,
the inequality $\sigma_\alpha(\theta)<s$  holds if and only if $|\theta\wedge \varpi |\geq n_{l+1}$. Hence for any $Q_{n_{l+1}} (\alpha)< s\leq Q_{n_l}(\alpha)$, we have
\begin{align*}
m_{\sigma_\alpha}(s)=&\mu_{\mathcal{D}}\Big(\theta\in \mathcal{D} | \sigma(\theta)<s\Big)= \mu_{\mathcal{D}}\Big(\Big\{\theta\in \mathcal{D} \big| | \theta\wedge \varpi | \ge n_{l + 1}\Big\}\Big)
\\
= & \mu_{\mathcal{D}}\Big(\Big\{\theta\in \mathcal{D} \big| \theta_1 = \cdots = \theta_{n_{l+1} -1} = 1 \an \theta_{n_{l+1}} = -1\Big\}\Big) = 2^{-n_{l+1}}.
\end{align*}
Consequently, for any $t\in [0, 1)$, we have
\begin{align*}
\overline{\sigma_\alpha}(t)& =\sup \left\{y \mid m_{\sigma_\alpha}(y)<t\right\}
=\sum_{l=0}^{N-1}Q_{n_l} (\alpha) \mathds{1}\Big(2^{-n_{l+1}}<t\leq 2^{-n_l}\Big)
\\
& = \sum_{l=0}^{N-1} \sum_{k= n_l}^{n_{l+1}-1}  Q_k (\alpha)  \mathds{1}(2^{-k-1}<t \le 2^{-k}) = \sum_{k=0}^\infty Q_k (\alpha) \mathds{1}(2^{-k-1}< t \le 2^{-k}).
\end{align*}
Therefore, we get the equality \eqref{eq-I-sigma}.

The equalities  \eqref{a-alpha-2sum} and \eqref{eq-I-sigma} together imply that there exist two constants $c, c'>0$ such that
\begin{align*}
c Q(\alpha) \le I(\sigma_\alpha) + \Big(\sum_{A\in \mathcal{F}(\N)} a_A(\alpha)^2\Big)^{1/2}  = I(\sigma_\alpha) + \Big(\sum_{k=1}^\infty \alpha_k^2\Big)^{1/2} \le c' Q(\alpha).
\end{align*}
We complete the proof of Proposition \ref{T:T2}.
\end{proof}

Our next goal is to show that  the random series \eqref{E:randomfunction} almost surely converges uniformly if and only if $Q(\alpha)<\infty$.

We use the following classical lemma whose proof is included for the reader's convenience.

\begin{lemma}\label{lem-erg}
Let $(\psi(n))_{n\in\mathbb{N}}$ be a non-decreasing sequence of positive numbers such that
\[
\lim_{n\rightarrow \infty}\psi(n)=\infty.
\]
Suppose that  $(a_n)_{n\in\mathbb{N}}$ is a sequence in $\mathbb{C}$ such that the limit
\[
\lim_{N\to\infty}\sum_{n=1}^{N}\frac{a_n}{\psi(n)}
\]
exists. Then
\begin{align*}
\lim_{n\rightarrow\infty} \frac{a_1+a_2+\cdots+a_n}{\psi(n)}=0.
\end{align*}
\end{lemma}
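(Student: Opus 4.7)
The plan is to prove this by Abel summation, the standard route to Kronecker's lemma. Write $b_0 = 0$ and
\[
b_N := \sum_{n=1}^{N}\frac{a_n}{\psi(n)}, \qquad b := \lim_{N\to\infty} b_N,
\]
which exists by hypothesis. Since $a_n = \psi(n)(b_n - b_{n-1})$, summation by parts yields
\[
\sum_{n=1}^{N} a_n \;=\; \sum_{n=1}^{N} \psi(n)(b_n - b_{n-1}) \;=\; \psi(N) b_N \;-\; \sum_{n=1}^{N-1} b_n\bigl(\psi(n+1)-\psi(n)\bigr).
\]
Dividing by $\psi(N)$ gives
\[
\frac{a_1+\cdots+a_N}{\psi(N)} \;=\; b_N \;-\; \frac{1}{\psi(N)}\sum_{n=1}^{N-1} b_n\bigl(\psi(n+1)-\psi(n)\bigr).
\]
Since $b_N \to b$, it suffices to show that the second term on the right also tends to $b$.

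For this, observe that the weights $w_n^{(N)} := (\psi(n+1)-\psi(n))/\psi(N)$ are non-negative and satisfy $\sum_{n=1}^{N-1} w_n^{(N)} = (\psi(N)-\psi(1))/\psi(N) \to 1$, so we are performing a weighted Ces\`aro average of the convergent sequence $(b_n)$. Concretely, fix $\varepsilon>0$ and choose $N_0$ with $|b_n - b|<\varepsilon$ for $n\geq N_0$. Split
\[
\frac{1}{\psi(N)}\sum_{n=1}^{N-1} b_n\bigl(\psi(n+1)-\psi(n)\bigr) \;=\; \frac{1}{\psi(N)}\sum_{n=1}^{N_0-1} b_n\bigl(\psi(n+1)-\psi(n)\bigr) \;+\; \frac{1}{\psi(N)}\sum_{n=N_0}^{N-1} b_n\bigl(\psi(n+1)-\psi(n)\bigr).
\]
The first sum is a fixed finite quantity divided by $\psi(N)\to\infty$, hence vanishes as $N\to\infty$. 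For the second sum, write $b_n = b + (b_n-b)$: the $b$-part contributes $b\cdot(\psi(N)-\psi(N_0))/\psi(N)\to b$, while the remainder is bounded in modulus by $\varepsilon \cdot(\psi(N)-\psi(N_0))/\psi(N) \leq \varepsilon$. Since $\varepsilon$ is arbitrary, the weighted average tends to $b$, and hence $(a_1+\cdots+a_N)/\psi(N) \to b - b = 0$.

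There is no real obstacle here; the only point requiring a touch of care is that $(b_n)$ is merely convergent (not bounded a priori in any quantitative way), which is why the splitting at $N_0$ is needed rather than a direct estimate. The monotonicity of $\psi$ ensures $w_n^{(N)}\geq 0$, which is what makes the weighted Ces\`aro step clean; the divergence $\psi(N)\to\infty$ is used both to kill the initial finite block and to make the weights a probability vector in the limit.
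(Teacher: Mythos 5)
Your proof is correct and follows essentially the same route as the paper's: summation by parts to express $(a_1+\cdots+a_N)/\psi(N)$ in terms of the partial sums $b_n$, then a split of the resulting non-negatively weighted sum at a threshold index, using the monotonicity of $\psi$ and $\psi(N)\to\infty$ to kill the initial block and control the tail. The paper merely phrases the final estimate via the Cauchy property of the differences $A_N-A_{k-1}$ rather than via the limit $b$ of a weighted Ces\`aro average, which is an equivalent reformulation.
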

\begin{proof}
For $n\in\mathbb{N}$, define
\[
A_n:=\sum_{k=1}^{n}\frac{a_k}{\psi(k)}.
\]
By convention, we set $A_0:=0$ and $\psi(0):=0$. Using Abel's summation method, we get
\begin{align*}
\sum_{k=1}^{n}a_k&=\sum_{k=1}^{n}\psi(k)(A_k-A_{k-1})
=\psi(n)A_n+\sum_{k=1}^{n-1}(\psi(k)-\psi(k+1))A_k.
\end{align*}
Hence
\begin{align*}
&\frac{a_1+a_2+\cdots+a_n}{\psi(n)}=A_n+\sum_{k=1}^{n-1}\frac{(\psi(k)-\psi(k+1))}{\psi(n)}A_k\\
=&\frac{\sum_{k=1}^{n}(\psi(k)-\psi(k-1))}{\psi(n)}A_n+\frac{\sum_{k=2}^{n}(\psi(k-1)-\psi(k))A_{k-1}}{\psi(n)}\\
=&\frac{\psi(1)}{\psi(n)}A_n+\frac{\sum_{k=2}^{n}(\psi(k)-\psi(k-1))(A_n-A_{k-1})}{\psi(n)}.
\end{align*}
Since $\psi(k)$ is non-decreasing positive,  we get
\begin{align*}
\Big|\frac{a_1+a_2+\cdots+a_n}{\psi(n)}\Big|\leq \frac{\psi(1)}{\psi(n)}|A_n|+\frac{\sum_{k=2}^{n}(\psi(k)-\psi(k-1))}{\psi(n)}|A_n-A_{k-1}|.
\end{align*}
By the assumption that the limit
\[
\lim_{n\rightarrow\infty}A_n = \sum_{k=1}^\infty \frac{a_k}{\psi(k)}
\]
 exists. For any $\epsilon>0$, there exists $N=N(\epsilon)>0$ such that for any $k,l>N$,
\[
|A_k-A_l|<\epsilon.
\]
Let $M=\sup_{n\in\mathbb{N}}|A_n|<\infty$. Then for any $n>N$
\begin{align*}
& \Big|\frac{a_1+a_2+\cdots+a_n}{\psi(n)}\Big|
\\
\leq& \frac{\psi(1)}{\psi(n)}M+\frac{\sum_{k=2}^{N}(\psi(k)-\psi(k-1))}{\psi(n)}2M+\frac{\sum_{k=N+1}^{n}(\psi(k)-\psi(k-1))}{\psi(n)}\epsilon\\
=&\frac{\psi(1)}{\psi(n)}M+2M\frac{\psi(N)-\psi(1)}{\psi(n)}+\frac{\psi(n)-\psi(N+1)}{\psi(n)}\epsilon\\
\leq& \frac{\psi(1)}{\psi(n)}M+2M\frac{\psi(N)-\psi(1)}{\psi(n)}+\epsilon.
\end{align*}
Hence by letting $n\rightarrow\infty$, we get
\begin{align*}
\limsup_{n\rightarrow\infty}\Big|\frac{a_1+a_2+\cdots+a_n}{\psi(n)}\Big|\leq \epsilon.
\end{align*}
Since $\epsilon>0$ is arbitrary, we complete the proof of the lemma.
\end{proof}

\begin{proposition}\label{prop-eq-binary}
 Let $\alpha= (\alpha_k)_{k=1}^\infty$ be a sequence of real numbers with $\sum_{k=1}^\infty \alpha_k^2<\infty$.  The following conditions are equivalent:
\begin{itemize}
\item[(1)] the maximum random variable
\begin{align}\label{def-M-star}
M^*(\alpha): = \sup_{n\in \N} \sup_{\theta \in \mathcal{D}}   \Big| \sum_{k=1}^n \alpha_k Z(\theta^{(k)}) \Big|
\end{align}
 is almost surely bounded;
\item[(2)] the following convergence holds:
\begin{align}\label{eqn-sufficiency}
\lim_{N\rightarrow\infty}\mathbb{E}\Big[\sup_{n\geq N}\sup_{m\geq 0}\sup_{\theta \in \mathcal{D}}\Big|\sum_{k=n}^{n+m}\alpha_kZ(\theta^{(k)})\Big|^2\Big]=0;
\end{align}
\item[(3)] almost surely, the  series $X(\alpha; \theta)$  defined in  \eqref{E:randomfunction}
converges uniformly in $\theta \in \mathcal{D}$;
\item[(4)] the sequence $\alpha = (\alpha_k)_{k=1}^\infty$ satisfies
\[
Q(\alpha)<\infty.
\]
\end{itemize}
Moreover, under one of the above equivalent conditions,  there exist two numerical constants $c_1, c_2>0$ such that
\begin{align}\label{M-star-es}
c_1 \cdot Q(\alpha) \le  \sqrt{\E[\sup_{\theta \in \mathcal{D}} | X(\alpha; \theta)|^2]} \le \sqrt{\E[M^*(\alpha)^2]} \le c_2  \cdot Q(\alpha).
\end{align}
\end{proposition}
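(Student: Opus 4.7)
The plan is to establish the four-way equivalence by proving the cycle $(4) \Rightarrow (2) \Rightarrow (3) \Rightarrow (1) \Rightarrow (4)$ and to extract \eqref{M-star-es} along the way. The easy implications are quick: for $(2) \Rightarrow (3)$, the quantity $\xi_N := \sup_{n \ge N,\, m \ge 0,\, \theta \in \mathcal{D}} |\sum_{k=n}^{n+m}\alpha_k Z(\theta^{(k)})|$ is monotone non-increasing in $N$, so the $L^2$-vanishing in \eqref{eqn-sufficiency} forces $\xi_N \to 0$ almost surely, which is the uniform Cauchy criterion; and $(3) \Rightarrow (1)$ is immediate. For $(1) \Rightarrow (4)$, I would invoke Borell--TIS to upgrade the almost sure finiteness of $M^*(\alpha)$ (the supremum of a Gaussian family) to $M^*(\alpha) \in L^2$. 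Setting $\alpha^{(N)} := (\alpha_1,\dots,\alpha_N,0,0,\dots)$, one has $X_N(\alpha;\theta) = X(\alpha^{(N)};\theta)$, a step function in $\theta$; hence $\sup_{\theta\in \mathcal{D}}|X_N(\alpha;\theta)| = \sup_{\theta\in S}|X_N(\alpha;\theta)|$ on any countable dense $S\subset \mathcal{D}$. By Lemma \ref{lem-X-and-hat} applied to $\alpha^{(N)}$ together with the continuity of the finite-sum $\widehat{X}(\alpha^{(N)};\theta)$, this has the same distribution as $\sup_{\theta}|\widehat{X}(\alpha^{(N)};\theta)|$, which dominates $c_1 Q(\alpha^{(N)})$ in $L^2$ by Proposition \ref{T:T2}. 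Letting $N\to\infty$ and using $Q(\alpha^{(N)}) \nearrow Q(\alpha)$ gives $(4)$ and the lower bound in \eqref{M-star-es}.

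The substantive step is $(4) \Rightarrow (2)$ together with the upper bound. The key is the submartingale property: for any countable dense $S\subset \mathcal{D}$ and the filtration $\mathcal{F}_n := \sigma(Z(v): |v|\le n)$, the sequence $(\sup_{\theta\in S}|X_n(\alpha;\theta)|)_n$ is a non-negative submartingale. Indeed, since $(X_n(\alpha;\theta))_n$ is a martingale for each fixed $\theta$, pointwise conditional Jensen gives
\[
\E\big[\sup_{\theta\in S}|X_{n+1}|\,\big|\,\mathcal{F}_n\big] \ge \sup_{\theta\in S}\E\big[|X_{n+1}|\,\big|\,\mathcal{F}_n\big] \ge \sup_{\theta\in S}|\E[X_{n+1}|\mathcal{F}_n]| = \sup_{\theta\in S}|X_n|.
\]
Doob's $L^2$ maximal inequality then yields $\E[M^*(\alpha)^2] \le 4\sup_n \E[\sup_\theta|X_n|^2]$ (using again that $X_n$ is a step function). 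By Lemma \ref{lem-X-and-hat} and the upper bound of Proposition \ref{T:T2}, $\E[\sup_\theta|X_n|^2] \le c_2^2 Q(\alpha^{(n)})^2 \le c_2^2 Q(\alpha)^2$, producing $\E[M^*(\alpha)^2] \le 4c_2^2 Q(\alpha)^2$ and the upper bound in \eqref{M-star-es}. Applying the same argument to the tail sequence $\alpha^{[N,\infty)} := (0,\dots,0,\alpha_N,\alpha_{N+1},\dots)$ and using $\big|\sum_{k=n}^{n+m}\alpha_k Z(\theta^{(k)})\big| \le 2\sup_{l\ge N-1}|X_l(\alpha;\theta) - X_{N-1}(\alpha;\theta)|$ gives
\[
\E\Big[\sup_{n\ge N,\, m\ge 0,\, \theta}\Big|\sum_{k=n}^{n+m}\alpha_k Z(\theta^{(k)})\Big|^2\Big] \le 16\, c_2^2\, Q(\alpha^{[N,\infty)})^2,
\]
reducing \eqref{eqn-sufficiency} to the single claim $Q(\alpha^{[N,\infty)}) \to 0$.

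For this last reduction, set $r_l := \sum_{k=l+1}^\infty \alpha_k^2$; splitting at level $N-1$ gives
\[
Q(\alpha^{[N,\infty)}) = \sqrt{r_{N-1}}\sum_{l=0}^{N-2}(l+1)^{-1/2} + \sum_{l\ge N-1}(l+1)^{-1/2}\sqrt{r_l},
\]
whose second term is a tail of the convergent series $Q(\alpha)$. For the first, since $\sum_{l=0}^{N-2}(l+1)^{-1/2} \asymp \sqrt{N}$, it suffices to show $\sqrt{N r_{N-1}} \to 0$, which I would derive from Lemma \ref{lem-erg} applied with $\psi(n) = \sqrt{n}$ and $a_n = \sqrt{r_{n-1}}$: the hypothesis $\sum_n a_n/\psi(n) = Q(\alpha) < \infty$ gives $a_1+\cdots+a_n = o(\sqrt{n})$, and the monotonicity $\sqrt{r_{n-1}} \le \sqrt{r_k}$ for $k\le n-1$ forces $n\sqrt{r_{n-1}} \le \sum_{k=1}^n \sqrt{r_{k-1}} = o(\sqrt{n})$, i.e.\ $\sqrt{N r_{N-1}} \to 0$.

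The main obstacle I anticipate is executing the submartingale Doob argument cleanly: Proposition \ref{T:T2} controls only the auxiliary Fourier-type process $\widehat{X}$, whereas we need estimates for every partial sum of the tree process $X$. The transfer rests on two complementary facts—the step-function structure of $X_n$ (making suprema over $\mathcal{D}$ and any dense countable $S$ coincide) and the continuity of $\widehat{X}(\alpha^{(n)};\theta)$ as a finite sum of Walsh functions—which together upgrade the distributional identity of Lemma \ref{lem-X-and-hat} on $S$ to an identity of the genuine suprema over $\mathcal{D}$, at which point Doob's inequality closes the chain.
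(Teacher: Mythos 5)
Your proposal is correct and rests on the same pillars as the paper's proof: the transfer to the Walsh-series process via Lemma \ref{lem-X-and-hat} and Proposition \ref{T:T2}, Doob's $L^2$ maximal inequality for a submartingale of suprema, Gaussian concentration for $(1)\Rightarrow(4)$, and Lemma \ref{lem-erg} to show $\sqrt{N}\,Q_{N-1}(\alpha)\to 0$. The one place you genuinely diverge is the organization of $(4)\Rightarrow(2)$: the paper first shows that $\Sigma_n=\sup_{m\ge 0}\sup_\theta\big|\sum_{k=n}^{n+m}\alpha_kZ(\theta^{(k)})\big|$ is a \emph{reverse} submartingale for the decreasing filtration $\sigma(Z(\theta^{(k)}):k\ge n)$, so one Doob application controls $\sup_{n\ge N}\Sigma_n$ by $\Sigma_N$, and then runs a forward submartingale in $m$ to control $\Sigma_N$; you instead bound the triple supremum by $2M^*(\alpha^{[N,\infty)})$ via the triangle inequality and run a single forward Doob argument on the tail sequence. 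Both routes reduce to the same estimate $\E[\sup_\theta|X(\beta;\theta)|^2]\le c_2^2\,Q(\beta)^2$ and the same computation of $Q(\alpha^{[N,\infty)})$, so the difference is purely organizational (your constant $16c_2^2$ versus the paper's is immaterial); likewise closing the cycle through $(3)\Rightarrow(1)$ rather than the paper's $(3)\Rightarrow(4)$ is fine. One small loose end: your argument delivers $c_1Q(\alpha)\le\sqrt{\E[M^*(\alpha)^2]}$, whereas the left inequality of \eqref{M-star-es} is the stronger $c_1Q(\alpha)\le\sqrt{\E[\sup_\theta|X(\alpha;\theta)|^2]}$; this is recovered by applying Proposition \ref{T:T2} and Lemma \ref{lem-X-and-hat} to the full sequence $\alpha$, using that the a.s.\ uniform convergence makes $X(\alpha;\cdot)$ continuous so its supremum over $\mathcal{D}$ coincides with that over a countable dense set --- exactly the observation you already make for the finite partial sums.
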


\begin{proof}
The implication $(2) \Longrightarrow (3)$ is elementary: by \eqref{eqn-sufficiency} and monotone convergence theorem, we have
\begin{align*}
\mathbb{E}\Big[\lim_{N\rightarrow\infty}\sup_{n\geq N}\sup_{m\geq 0}\sup_{\theta\in\mathcal{D}}\Big|\sum_{k=n}^{n+m}\alpha_kZ(\theta^{(k)})\Big|^2\Big]=0.
\end{align*}
Hence almost surely,
\begin{align*}
\lim_{N\rightarrow\infty}\sup_{n\geq N}\sup_{m\geq 0}\sup_{\theta\in\mathcal{D}}\Big|\sum_{k=n}^{n+m}\alpha_kZ(\theta^{(k)})\Big|=0.
\end{align*}
The almost sure uniform convergence of the random series  $X(\alpha; \theta)$ follows immediately.

The implication $(1) \Longrightarrow (4)$ is given as follows. Assume that
\[
M^*(\alpha) = \sup_{n\in \N} \sup_{\theta \in \mathcal{D}}|\sum_{k=1}^n \alpha_k Z(\theta^{(k)})|<\infty  \quad a.s.
\]
Then, by \cite[Theorem 7.1]{ledouxbook},  $M^*(\alpha)$ is sub-Gaussian.  It follows that
\begin{align}\label{M-star-exp}
\sup_{n\in \N} \E \Big[\sup_{\theta \in \mathcal{D}}\Big|\sum_{k=1}^n \alpha_k Z(\theta^{(k)})\Big|^2\Big]\le \E[M^*(\alpha)^2]<\infty.
\end{align}
But for each fixed $n\in \N$, by defining the finitely supported sequence $\alpha^{(n)} = (\alpha^{(n)}_k)_{k=1}^\infty$ as
\[
\alpha^{(n)}_k: = \alpha_k \mathds{1}(k\le n)
\]
and using \eqref{2-side-2norm}, we have
\begin{align}\label{finite-alpha}
\Big(\E \Big[\sup_{\theta \in \mathcal{D}}\Big|\sum_{k=1}^n \alpha_k Z(\theta^{(k)})\Big|^2\Big]\Big)^{1/2} \ge c_1 Q(\alpha^{(n)}) = c_1 \sum_{l=0}^{n-1} \Big(\frac{1}{l+1} \sum_{k= l+1}^n \alpha_k^2\Big)^{1/2}.
\end{align}
Combining  \eqref{M-star-exp} and \eqref{finite-alpha}, we obtain
\begin{align*}
Q(\alpha) = \sup_{n\in \N} Q(\alpha^{(n)})\le \frac{\sqrt{\E[M^*(\alpha)^2]}}{c_1}<\infty.
\end{align*}

Now we pass to the proof of the implication $(4) \Longrightarrow (2)$. Assume  that $Q(\alpha)<\infty$. We are going to prove \eqref{eqn-sufficiency}. For any $n\in \N$, define  the random variable
\begin{align}\label{def-sigma-n}
\Sigma_n:=\sup_{m\geq 0} \sup_{\theta\in \mathcal{D}}\Big|\sum_{k=n}^{n+m}\alpha_kZ(\theta^{(k)})\Big|
\end{align}
and the sigma-algebra
\[
 \mathcal{F}_n:=\sigma\Big( \Big\{ Z(\theta^{(k)})\Big|  \theta \in \mathcal{D} \an k \ge n \Big\}\Big).
\]
Clearly,
$
\mathcal{F}_1 \supset \mathcal{F}_2\supset \cdots.
$
Notice that for any $n\ge 1$, any $m\ge 0$ and any $\theta\in \mathcal{D}$, we have
\begin{align*}
\mathbb{E}\Big[\sum_{k=n}^{n+m+1}\alpha_kZ(\theta^{(k)})\Big|\mathcal{F}_{n+1}\Big]=\sum_{k=n+1}^{n+m+1}\alpha_kZ(\theta^{(k)}).
\end{align*}
By Jensen's  inequality for conditional expectation, we get
\begin{align*}
\Sigma_{n+1}=&\sup_{m\geq 0} \sup_{\theta\in \mathcal{D}}\Big|\sum_{k=n+1}^{n+1 + m}\alpha_kZ(\theta^{(k)})\Big|=\sup_{m\geq 0} \sup_{\theta\in \mathcal{D}}\Big|\mathbb{E}\Big[\sum_{k=n}^{n+m+1}\alpha_kZ(\theta^{(k)})\Big|\mathcal{F}_{n+1}\Big]\Big|\\
\leq&\mathbb{E}\Big[\sup_{m\geq 0} \sup_{\theta\in \mathcal{D}}\big|\sum_{k=n}^{n+m+1}\alpha_kZ(\theta^{(k)})\big|\Big|\mathcal{F}_{n+1}\Big]\\
=&\mathbb{E}\Big[\sup_{m\geq 1} \sup_{\theta\in \mathcal{D}}\big|\sum_{k=n}^{n+m}\alpha_kZ(\theta^{(k)})\big|\Big|\mathcal{F}_{n+1}\Big]\\
\leq &\mathbb{E}\Big[\sup_{m\geq 0} \sup_{\theta \in \mathcal{D}}\big|\sum_{k=n}^{n+m}\alpha_kZ(\theta^{(k)})\big|\Big|\mathcal{F}_{n+1}\Big]\\
=&\mathbb{E}[\Sigma_n|\mathcal{F}_{n+1}].
\end{align*}
Thus $(\Sigma_n, \mathcal{F}_n)_{n\in\mathbb{N}}$ is a reverse sub-martingale. By Doob's inequality,  for any $N,\tilde{N}\in\mathbb{N}$,
\[
\mathbb{E}[\sup_{N\leq n\leq N+\tilde{N}}\Sigma_n^2]\leq  4 \mathbb{E}[\Sigma_N^2].
\]
Letting $\tilde{N}\rightarrow\infty$, we get that for any $N\in\mathbb{N}$,
\[
\mathbb{E}[\sup_{n\geq N}\Sigma_n^2]\leq  4 \mathbb{E}[\Sigma_N^2].
\]
To investigate $\mathbb{E}[\Sigma_N^2]$, we define for any fixed $n\in\mathbb{N}$,
\[
\widehat{\Sigma}_m^{(n)}:=\sup_{\theta\in \mathcal{D}}\Big|\sum_{k=n}^{n+m}\alpha_kZ(\theta^{(k)})\Big|,\quad m=0, 1,2,...
\]
 and the sigma-algebras
\[
\widehat{\mathcal{F}}_m^{(n)}:=\sigma\Big( \Big\{ Z(\theta^{(k)})\Big|  \theta \in \mathcal{D}, k \leq n+m)\Big\} \Big),\quad m=0,1,2,....
\]
Clearly, we have
$
\widehat{\mathcal{F}}_0^{(n)} \subset \widehat{\mathcal{F}}_1^{(n)} \subset \widehat{\mathcal{F}}_2^{(n)} \subset \cdots
$
and
\[
\mathbb{E}\Big[\sum_{k=n}^{n+m+1}\alpha_kZ(\theta^{(k)})|\widehat{\mathcal{F}}_m^{(n)}\Big]=\sum_{k=n}^{n+m}\alpha_kZ(\theta^{(k)}).
\]
Hence by Jensen's inequality,
\begin{align*}
\widehat{\Sigma}_m^{(n)}=&\sup_{\theta\in \mathcal{D}}\big|\sum_{k=n}^{n+m}\alpha_kZ(\theta^{(k)})\big|=\sup_{\theta\in \mathcal{D}}\big|\mathbb{E}\Big[\sum_{k=n}^{n+m+1}\alpha_kZ(\theta^{(k)})|\widehat{\mathcal{F}}_m^{(n)}\Big]\big|\\
\leq &\mathbb{E}\Big[\sup_{\theta\in \mathcal{D}}\big|\sum_{k=n}^{n+m+1}\alpha_kZ(\theta^{(k)})\big|\Big|\widehat{\mathcal{F}}_m^{(n)}\Big]=\mathbb{E}\big[\widehat{\Sigma}_{m+1}^{(n)}|\widehat{\mathcal{F}}_m^{(n)}\big].
\end{align*}
Thus $(\widehat{\Sigma}_m^{(n)},\widehat{\mathcal{F}}_m^{(n)})_{m\in\mathbb{N}}$ is a sub-martingale. By Doob's inequality again, we have
\[
\mathbb{E}[\Sigma_N^2]=\mathbb{E}[\sup_{m\geq 0}\big(\widehat{\Sigma}_m^{(N)}\big)^2]=\lim_{L\to \infty}\mathbb{E}[\sup_{0\leq m\leq L}\big(\widehat{\Sigma}_m^{(N)}\big)^2]\leq 4 \lim_{L\to \infty}\mathbb{E}[\big(\widehat{\Sigma}_L^{(N)}\big)^2].
\]
Now we study $\mathbb{E}[\big(\widehat{\Sigma}_L^{(N)}\big)^2]$ for fixed $N$ and $L$. Clearly,  by setting $\alpha^{(N,L)}$ as
\[
\alpha^{(N,L)}_k =  \mathds{1}(N\le k\le N+L) \alpha_k,
\]
we have
\[
\widehat{\Sigma}_L^{(N)} =\sup_{\theta\in \mathcal{D}}\Big|\sum_{k=N}^{N+L}\alpha_kZ(\theta^{(k)})\Big| =  \sup_{\theta\in \mathcal{D}}\Big|\sum_{k=1}^{\infty}\alpha_k^{(N,L)} Z(\theta^{(k)})\Big|  =  \sup_{\theta\in \mathcal{D}} |X(\alpha^{(N,L)}; \theta)|.
\]
Therefore, by  \eqref{2-side-2norm}, we have
\begin{align*}
\mathbb{E}\Big[\big(\widehat{\Sigma}_L^{(N)}\big)^2\Big]&=\mathbb{E}\Big[  \sup_{\theta\in \mathcal{D}} |X(\alpha^{(N,L)}; \theta)|^2\Big] \le c_2^2 \cdot  Q(\alpha^{(N,L)})^2
\\
& \leq  c_2^2 \Big[\sum_{l=0}^{N-1}\frac{1}{\sqrt{l+1}}\Big(\sum_{k=N}^{N+L}\alpha_k^2\Big)^{1/2}+\sum_{l=N}^{N+L}\frac{1}{\sqrt{l+1}}\Big(\sum_{k=l+1}^{N+L}\alpha_k^2\Big)^{1/2}\Big]^2.
\end{align*}
Recall the definition  \eqref{def-Q-l} of $Q_l(\alpha)$. We have
\begin{align}\label{sigma-N-es}
\begin{split}
\mathbb{E}[\Sigma_N^2] & \le 4 \lim_{L\to\infty} \mathbb{E}\Big[\big(\widehat{\Sigma}_L^{(N)}\big)^2\Big]
\\
& \leq  4 c_2^2 \Big[\sum_{l=0}^{N-1}\frac{1}{\sqrt{l+1}}\Big(\sum_{k=N}^{\infty}\alpha_k^2\Big)^{1/2}+\sum_{l=N}^{\infty}\frac{1}{\sqrt{l+1}}\Big(\sum_{k=l+1}^{\infty}\alpha_k^2\Big)^{1/2}\Big]^2
\\
&  =  4 c_2^2 \Big[ Q_{N-1}(\alpha) \sum_{l=0}^{N-1}\frac{1}{\sqrt{l+1}} +  \sum_{l=N}^{\infty}\frac{1}{\sqrt{l+1}} Q_l(\alpha)\Big]^2
\\
& \le 4 c_2^2 \Big[2 \sqrt{N} Q_{N-1}(\alpha) + \sum_{l=N}^{\infty}\frac{1}{\sqrt{l+1}} Q_l(\alpha) \Big]^2.
\end{split}
\end{align}
The condition
\[
Q(\alpha) = \sum_{l=0}^\infty \frac{1}{\sqrt{l+1}}Q_l(\alpha)<\infty
\] on the one hand implies
\[
\lim_{N\to\infty} \sum_{l=N}^{\infty}\frac{1}{\sqrt{l+1}} Q_l(\alpha) = 0
\]
and, on the other hand,  by Lemma \ref{lem-erg}, implies
\[
\lim_{N\to\infty} \frac{Q_0(\alpha) + \cdots + Q_{N-1}(\alpha)}{\sqrt{N}} =0.
\]
Hence
\begin{align}\label{N-Q-N}
\limsup_{N\to\infty} \sqrt{N} Q_{N-1}(\alpha) \le \limsup_{N\to\infty} \frac{Q_0(\alpha) + \cdots + Q_{N-1}(\alpha)}{\sqrt{N}} = 0.
\end{align}
We then obtain
\begin{align*}
 \lim_{N\rightarrow\infty}\mathbb{E}\Big[\sup_{n\geq N}\sup_{m\geq 0}\sup_{\theta\in\mathcal{D}}\Big|\sum_{k=n}^{n+m}\alpha_kZ(\theta^{(k)})\Big|^2\Big]  =  \lim_{N\rightarrow\infty} \mathbb{E}[\sup_{n\geq N}\Sigma_n^2]\leq  4 \limsup_{N\to\infty} \mathbb{E}[\Sigma_N^2] = 0.
\end{align*}

The implication $(3) \Longrightarrow (4)$ follows from Lemma \ref{lem-X-and-hat} and Proposition \ref{T:T2}.

We now prove the implication $(4) \Longrightarrow (1)$.  Suppose that $Q(\alpha)<\infty$.  From the definition \eqref{def-sigma-n}, we have
$M^*(\alpha) = \Sigma_1$.
Applying the inequality \eqref{sigma-N-es} for $N=1$, we obtain
\begin{align}\label{goal-M-star}
\E[ M^*(\alpha)^2] \le 16c_2^2  \cdot Q(\alpha)^2.
\end{align}
Hence $M^*(\alpha)<\infty$ a.s.

Finally, the inequalities \eqref{M-star-es} follow from Proposition \ref{T:T2}, the inequality \eqref{goal-M-star} and the following  observation: if almost surely the series \eqref{E:randomfunction}  converges uniformly, then
\[
\sup_{\theta\in \mathcal{D}} | X(\alpha; \theta)| \le M^*(\alpha).
\]
We complete the proof of the proposition.
\end{proof}

\section{More general trees}

In this section, we prove Theorem~\ref{thm-bdd-degree} and Theorem~\ref{thm-all-tree}.  The main ingredient in our proofs for  more general trees  is a reduction to the case of the binary tree.

\subsection{Reduction to the case of the binary tree}

Recall the definitions  \eqref{def-X} and \eqref{def-maxima}. For any rooted tree $\TT$, we define a random variable
\[
\Sigma_N(\TT, \alpha): = \sup_{n\geq N}\sup_{m\geq 0}\sup_{\xi\in\partial\TT}\Big|\sum_{k=n}^{n+m}\alpha_kZ(\pi_k(\xi))\Big|.
\]
Note that if $\iota: \TT\rightarrow \TT'$ is  a root-preserving and partial order-preserving isometric embedding, then $\iota$ induces a natural embedding of $\partial \TT$ into $\partial \TT'$.

We need the following  elementary lemmas whose proofs are omitted.
\begin{lemma}\label{lem-dominance}
Let $\TT$ and $\TT'$ be two rooted trees. Assume that there exists a root-preserving and partial order-preserving isometric embedding $\iota: \TT\rightarrow \TT'$. Then
\[
M_n(\TT, \alpha)\le_s M_n(\TT', \alpha), \quad M^*(\TT, \alpha)\le_s M^*(\TT', \alpha)
\]
and
\[
\Sigma_N(\TT, \alpha) \le_s \Sigma_N(\TT', \alpha) \quad \text{for all $N\in \N$,}
\]
where $\le_s$ means the stochastic domination.
\end{lemma}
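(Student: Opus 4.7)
The plan is to reduce all three stochastic inequalities to an almost-sure pointwise inequality by constructing an explicit coupling. The only ingredient beyond bookkeeping is the following observation: a root-preserving, partial-order-preserving isometric embedding $\iota:\TT\hookrightarrow\TT'$ satisfies $|\iota(v)|=|v|$, and it induces a map $\hat\iota:\partial\TT\to\partial\TT'$ defined by
\[
\hat\iota(\xi):=(\rho',\iota(v_1),\iota(v_2),\ldots) \quad\text{when}\quad \xi=(\rho,v_1,v_2,\ldots),
\]
which is well-defined because $v_i\pless v_{i+1}$ and $d(v_i,v_{i+1})=1$ force $\iota(v_i)\pless\iota(v_{i+1})$ and $d(\iota(v_i),\iota(v_{i+1}))=1$. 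With this map one checks immediately the compatibility
\[
\pi_k(\hat\iota(\xi))=\iota(\pi_k(\xi))\quad\text{for all }k\in\N,\ \xi\in\partial\TT.
\]

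Next, I would realize the two Gaussian families on a common probability space as follows. Let $(Z'(v'))_{v'\in\TT'}$ be a family of independent $N(0,1)$ random variables defined on some probability space, and set
\[
\widetilde Z(v):=Z'(\iota(v)),\quad v\in\TT.
\]
Since $\iota$ is injective, the family $(\widetilde Z(v))_{v\in\TT}$ is a subfamily of $(Z'(v'))_{v'\in\TT'}$, hence is itself a family of independent $N(0,1)$ random variables. In particular, the process
\[
\widetilde X_n(\TT,\alpha;\xi):=\sum_{k=1}^n\alpha_k\widetilde Z(\pi_k(\xi))
\]
has the same law as $X_n(\TT,\alpha;\xi)$, and the same holds jointly in $(n,\xi)$, so the three random variables built from $\widetilde Z$ (analogues of $M_n(\TT,\alpha)$, $M^*(\TT,\alpha)$, $\Sigma_N(\TT,\alpha)$) have the same distributions as the originals.

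The key step is now a tautology: by the compatibility above,
\[
\widetilde X_n(\TT,\alpha;\xi)=\sum_{k=1}^n\alpha_k Z'(\iota(\pi_k(\xi)))=\sum_{k=1}^n\alpha_k Z'(\pi_k(\hat\iota(\xi)))=X_n(\TT',\alpha;\hat\iota(\xi)).
\]
Hence, taking $\sup$ over $\xi\in\partial\TT$ on the left and enlarging to $\sup$ over $\xi'\in\partial\TT'$ on the right,
\[
\sup_{\xi\in\partial\TT}|\widetilde X_n(\TT,\alpha;\xi)|\le\sup_{\xi'\in\partial\TT'}|X_n(\TT',\alpha;\xi')|\quad\text{a.s.},
\]
and analogously with additional suprema over $n$, or over the tail sums $\sum_{k=n}^{n+m}\alpha_k Z(\pi_k(\xi))$, one gets the pointwise inequalities for $M^*$ and $\Sigma_N$. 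Stochastic domination of the originals follows because the left-hand sides have the right marginal distributions. There is no real obstacle; the only point requiring care is verifying that $\hat\iota$ is well-defined and respects the projections $\pi_k$, which is immediate from the three hypotheses (root-preserving, partial-order-preserving, isometric).
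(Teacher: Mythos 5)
Your proof is correct, and since the paper explicitly omits the proof of this lemma as elementary, your coupling argument (realizing $(Z(v))_{v\in\TT}$ as $(Z'(\iota(v)))_{v\in\TT}$ and passing to the pointwise inequality of suprema via the induced boundary map $\hat\iota$) is precisely the intended standard argument. No gaps; the well-definedness of $\hat\iota$ and the compatibility $\pi_k\circ\hat\iota=\iota\circ\pi_k$ are exactly the points worth checking, and you checked them.
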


Given any sequence $\mathfrak{q} = (q_n)_{n=1}^\infty$ in $\N$, let $\mathcal{T}(\mathfrak{q})$ be the rooted tree such that each vertex in the $(n-1)$-th generation has exactly $q_{n}$ children for each $n\in\N$. Given a sequence $\mathfrak{l} = (l_1, l_2, \dots)$ of positive integers, we define $2^\mathfrak{l}$ to be the following sequence
\begin{align}\label{def-2-l}
2^\mathfrak{l}: = (2^{l_1}, 2^{l_2}, \dots).
\end{align}
For any number sequence $\alpha = (\alpha_1, \alpha_2, \dots)$, let  $\alpha[\mathfrak{l}] = (\beta_1, \beta_2, \dots)$ be the sequence with
\begin{align}\label{def-beta}
\beta_l = \alpha[\mathfrak{l}]_l=\begin{cases}
\alpha_k&\text{ if } l=l_1+l_2+...+l_k\\
0& \text{ otherwise}
\end{cases}.
\end{align}
Recall the definition \eqref{def-Q-alpha} of $Q(\alpha)$ and definition \eqref{def-Q-l} of $Q_l(\alpha)$. Setting $l_0 =0$, we have
\[
Q(\alpha[\mathfrak{l}]) = \sum_{k=0}^\infty Q_k(\alpha) \sum_{l_0 + \cdots + l_k\le l < l_0 + \cdots + l_{k+1}} \frac{1}{\sqrt{l+1}}.
\]
Hence there exist two numerical constants $c, c'>0$ such that
\[
c Q(\alpha[\mathfrak{l}]) \le   \sum_{k=0}^\infty \frac{l_{k+1}}{\sqrt{l_1 + \cdots + l_{k+1}}}\Big(\sum_{n=k+1}^\infty\alpha_n^2\Big)^{1/2} \le c' Q(\alpha[\mathfrak{l}]).
\]

\begin{lemma}\label{lem-dya-subtree}
We have the following equalities in distribution:
\begin{align*}
 M_{n}(\TT(2^{\mathfrak{l}}), \alpha) \stackrel{d}{=}  M_{l_1 + \cdots + l_n}(\TT_2, \alpha[\mathfrak{l}]), \quad
   M^*(\TT(2^{\mathfrak{l}}), \alpha)  \stackrel{d}{=}  M^*(\TT_2, \alpha[\mathfrak{l}])
\end{align*}
and
\[
\Sigma_{n}(\TT(2^{\mathfrak{l}}); \alpha) \stackrel{d}{=}  \Sigma_{l_1 + \cdots+l_n} (\TT_2; \alpha[\mathfrak{l}]).
\]
\end{lemma}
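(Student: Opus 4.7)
The plan is to construct an explicit pathwise coupling of the Gaussian data on $\TT(2^{\mathfrak{l}})$ with that on $\TT_2$ under which all three distributional equalities become pathwise identities. The guiding intuition is that padding the coefficient sequence with zeros in the prescribed pattern replaces each $2^{l_k}$-ary branching level of $\TT(2^{\mathfrak{l}})$ by $l_k$ consecutive binary branching levels of $\TT_2$, without altering any partial-sum value.

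First I would encode each vertex $v$ at depth $n$ of $\TT(2^{\mathfrak{l}})$ by a tuple $(c_1,\ldots,c_n)$ with $c_k\in\{0,1\}^{l_k}$, where $c_k$ is the length-$l_k$ binary expansion of the index in $\{1,\ldots,2^{l_k}\}$ of the branch chosen at step $k$. The concatenation $c_1c_2\cdots c_n$ is a binary string of length $l_1+\cdots+l_n$ and thus identifies a unique vertex $\iota(v)\in\TT_2$ at that depth. The map $\iota$ is a root-preserving and order-preserving injection whose image is exactly the set of vertices of $\TT_2$ at depths $\{l_1+\cdots+l_k\}_{k\geq 0}$, and it induces a bijection $\Phi:\partial\TT(2^{\mathfrak{l}})\to\partial\TT_2$ satisfying $\pi_{l_1+\cdots+l_k}(\Phi(\xi))=\iota(v_k)$ for every $\xi=(\rho,v_1,v_2,\ldots)$.

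Given an i.i.d.\ standard Gaussian family $(Z'(w))_{w\in\TT_2}$, I would set $Z(v):=Z'(\iota(v))$; this produces an i.i.d.\ standard Gaussian family on $\TT(2^{\mathfrak{l}})$ and hence an admissible realization of the Gaussians appearing in $X_n(\TT(2^{\mathfrak{l}}),\alpha;\cdot)$. Using that $\beta_j=0$ unless $j=l_1+\cdots+l_k$ for some $k$, one then has, for every $\xi\in\partial\TT(2^{\mathfrak{l}})$ with image $\xi'=\Phi(\xi)$,
\begin{align*}
X_{l_1+\cdots+l_n}(\TT_2,\alpha[\mathfrak{l}];\xi')
=\sum_{j=1}^{l_1+\cdots+l_n}\beta_j Z'(\pi_j(\xi'))
=\sum_{k=1}^{n}\alpha_k Z'(\iota(v_k))
=X_n(\TT(2^{\mathfrak{l}}),\alpha;\xi).
\end{align*}
Taking the supremum over $\xi\leftrightarrow\xi'$ immediately yields the first identity of the lemma pathwise, hence in distribution.

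For $M^*$ and $\Sigma_n$ I would use the same coupling together with a simple collapsing remark. Since $\beta_j$ vanishes off the active indices $\{l_1+\cdots+l_k\}_{k\geq 1}$, the map $l\mapsto X_l(\TT_2,\alpha[\mathfrak{l}];\xi')$ is a step function that changes value only at these active indices, so $\sup_{l\in\N}|X_l|=\sup_{k\geq 0}|X_{l_1+\cdots+l_k}|$ and hence $M^*(\TT_2,\alpha[\mathfrak{l}])=M^*(\TT(2^{\mathfrak{l}}),\alpha)$. The analogous snapping argument applied to each interval $[l,l+p]$ with $l\geq l_1+\cdots+l_n$, replacing $l$ by the smallest active index $\geq l$ and $l+p$ by the largest active index $\leq l+p$, converts the double supremum defining $\Sigma_{l_1+\cdots+l_n}(\TT_2,\alpha[\mathfrak{l}])$ into the one defining $\Sigma_n(\TT(2^{\mathfrak{l}}),\alpha)$. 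There is no serious technical obstacle beyond this bookkeeping; in particular one checks that $l_1+\cdots+l_n$ is itself an active index producing $\alpha_n$, so the two starting thresholds align correctly.
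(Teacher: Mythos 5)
Your coupling argument is correct and is precisely the natural identification the authors have in mind (the paper explicitly omits the proof of this lemma as elementary): the bijective, order-preserving encoding $\iota$ turns each level of $\TT(2^{\mathfrak{l}})$ into a block of $l_k$ binary levels, and since $\alpha[\mathfrak{l}]$ vanishes off the active depths $l_1+\cdots+l_k$, all three quantities agree pathwise under $Z(v)=Z'(\iota(v))$. The snapping step for $\Sigma$, including the observation that empty or inactive windows contribute $0\le$ the supremum and that $l_1+\cdots+l_n$ is itself active, is exactly the bookkeeping needed.
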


\begin{lemma}\label{lem-two-embed}
Given $\mathfrak{q}=(q_1,q_2,...)$ with $q_i\geq 2$ for any $i\in\N$. There exist two root-preserving and partial order-preserving isometric embeddings
\[\TT(2^{\mathfrak{l}^{-}(\mathfrak{q})}) \hookrightarrow \TT(\mathfrak{q})\hookrightarrow \TT(2^{\mathfrak{l}^{+}(\mathfrak{q})})
\] with $\mathfrak{l}^{\pm}(\mathfrak{q})$ the sequences of positive integers defined by
\[
\mathfrak{l}^{-}(\mathfrak{q}): = \Big(\Big\lfloor  \frac{\log q_1}{\log 2} \Big\rfloor, \Big\lfloor  \frac{\log q_2}{\log 2} \Big\rfloor, \dots \Big) \an \mathfrak{l}^{+}(\mathfrak{q}): = \Big(\Big\lceil  \frac{\log q_1}{\log 2} \Big\rceil, \Big\lceil  \frac{\log q_2}{\log 2} \Big\rceil, \dots \Big),
\]
where $\lfloor x \rfloor$  is the largest integer not bigger than $x$ and $\lceil x \rceil$  is the smallest integer not smaller than $x$.
\end{lemma}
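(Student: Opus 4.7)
The plan is to construct both embeddings explicitly by induction on the level of vertices, and then check that the resulting map is automatically isometric on a rooted tree. Root-preservation and preservation of the partial order will be built into the construction; the only nontrivial point is isometry, which reduces to the preservation of ancestors.

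For the right-hand embedding $\iota^{+}:\TT(\mathfrak{q}) \hookrightarrow \TT(2^{\mathfrak{l}^{+}(\mathfrak{q})})$, I would set $\iota^{+}(\rho)=\rho'$, the root of the target. Inductively, assuming $\iota^{+}$ has been defined on all vertices at level $n-1$, each such $v$ has $q_n$ children in the source while $\iota^{+}(v)$ has $2^{\lceil \log q_n/\log 2 \rceil}$ children in the target; since $q_n \le 2^{\lceil \log q_n/\log 2 \rceil}$, one may pick any injection from the children of $v$ into the children of $\iota^{+}(v)$ and declare that to be $\iota^{+}$ on the next level. The left-hand embedding $\iota^{-}:\TT(2^{\mathfrak{l}^{-}(\mathfrak{q})}) \hookrightarrow \TT(\mathfrak{q})$ is built the same way using the reverse inequality $2^{\lfloor \log q_n/\log 2 \rfloor} \le q_n$, which requires $q_n\ge 2$ so that the floor is a well-defined positive integer.

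To verify the properties, fix one of the two maps and call it $\iota$. Root-preservation holds by definition. Partial-order preservation is immediate: if $u\pless v$ in the source, then $v$ lies in the subtree rooted at $u$, and the recursive construction sends this entire subtree into the subtree rooted at $\iota(u)$. The key observation for isometry is that, by the recursive definition together with uniqueness of geodesics in trees, the geodesic from the root to $\iota(v)$ in the target is exactly the image under $\iota$ of the geodesic from the root to $v$ in the source; equivalently, the ancestors of $\iota(v)$ form the set $\{\iota(u):u\pless v\}$. From this one deduces $\iota(u)\wedge \iota(v)=\iota(u\wedge v)$ for all $u,v$, and combined with $|\iota(v)|=|v|$ (level-preservation, again by construction) and the identity $d(u,v)=|u|+|v|-2|u\wedge v|$ for the graph distance on a rooted tree, isometry follows.

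I do not anticipate any serious obstacle; the construction is purely combinatorial and the only subtlety is the ancestor-preservation argument, which depends on uniqueness of paths in trees and the injectivity maintained at each inductive step.
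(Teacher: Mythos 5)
Your construction is correct and is precisely the elementary argument the paper has in mind (the authors omit the proof of Lemma \ref{lem-two-embed} as elementary): the level-by-level injections exist because $2^{\lfloor \log q_n/\log 2\rfloor}\le q_n\le 2^{\lceil \log q_n/\log 2\rceil}$, and your ancestor-preservation observation correctly reduces isometry to $|\iota(u)|=|u|$ and $\iota(u)\wedge\iota(v)=\iota(u\wedge v)$ via $d(u,v)=|u|+|v|-2|u\wedge v|$. No gaps.
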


Combining Lemmas \ref{lem-dominance}, \ref{lem-dya-subtree} and \ref{lem-two-embed}, we obtain
\begin{proposition}\label{prop-comp-binary}
Given $\mathfrak{q}=(q_1,q_2,...)$ with $q_i\geq 2$ for any $i\in\N$.  Let $\alpha= (\alpha_k)_{k=1}^\infty$ be a sequence of real numbers with $\sum_{k=1}^\infty \alpha_k^2<\infty$.  The following conditions are equivalent:
\begin{itemize}
\item[(4)]  the random variable
\[
M^*(\TT(\mathfrak{q}), \alpha) = \sup_{n\in \N} \sup_{\xi\in \partial \TT(\mathfrak{q})} | X_n(\TT, \alpha; \xi)|
\] is almost surely bounded;
\item[(2)] the following convergence holds:
\begin{align}\label{T-q-unif}
\lim_{N\rightarrow\infty}\mathbb{E}\Big[\sup_{n\geq N}\sup_{m\geq 0}\sup_{\xi\in\partial\TT(\mathfrak{q})}\Big|\sum_{k=n}^{n+m}\alpha_kZ(\pi_k(\xi))\Big|^2\Big]=0;
\end{align}
\item[(3)] almost surely, the series \eqref{def-X-proc}
converges uniformly in $\xi \in \partial \TT(\mathfrak{q})$;
\item[(1)] the sequence $\alpha = (\alpha_k)_{k=1}^\infty$ satisfies
\begin{align}\label{def-Q-hat}
Q(\mathfrak{q}; \alpha): =   \sum_{k=0}^\infty \frac{\log q_{k+1}}{\sqrt{\log (q_1 \cdots q_{k+1})}}\Big(\sum_{n=k+1}^\infty\alpha_n^2\Big)^{1/2}<\infty.
\end{align}
\end{itemize}
Moreover, under one of the above equivalent conditions,  there exist two numerical constants $c_1, c_2>0$ such that
\begin{align}\label{eqn::universal-constant}
c_1 \cdot Q(\mathfrak{q}; \alpha) \le \Big(\E\Big[\sup_{\xi\in \partial \TT(\mathfrak{q})} | X(\TT(\mathfrak{q}), \alpha; \xi)|^2\Big]\Big)^{1/2}\le \sqrt{\E[M^*(\TT(\mathfrak{q}), \alpha)^2]} \le c_2  \cdot Q( \mathfrak{q}; \alpha).
\end{align}
\end{proposition}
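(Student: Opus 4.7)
My strategy is to sandwich $\TT(\mathfrak{q})$ between the two ``dyadic replicas'' $\TT(2^{\mathfrak{l}^-(\mathfrak{q})})$ and $\TT(2^{\mathfrak{l}^+(\mathfrak{q})})$ furnished by Lemma~\ref{lem-two-embed}, and then transfer the whole statement to the binary tree via Lemma~\ref{lem-dya-subtree}, where Proposition~\ref{prop-eq-binary} already gives the conclusion.

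First I would establish the comparability of the three functionals
\[
Q(\alpha[\mathfrak{l}^-(\mathfrak{q})]) \,\asymp\, Q(\mathfrak{q}; \alpha) \,\asymp\, Q(\alpha[\mathfrak{l}^+(\mathfrak{q})])
\]
with universal constants. Since $q_i \ge 2$, one has $\log q_i/\log 2 \ge 1$, so both $\lfloor \log q_i / \log 2\rfloor$ and $\lceil \log q_i / \log 2\rceil$ are within a universal constant factor of $\log q_i/\log 2$. Substituting these comparisons into the two-sided bound
\[
c\,Q(\alpha[\mathfrak{l}]) \,\le\, \sum_{k=0}^\infty \frac{l_{k+1}}{\sqrt{l_1+\cdots+l_{k+1}}}\Big(\sum_{n=k+1}^\infty \alpha_n^2\Big)^{1/2} \,\le\, c'\,Q(\alpha[\mathfrak{l}])
\]
already recorded in the text immediately yields the desired chain of comparisons.

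Next I would combine Lemmas~\ref{lem-dominance}, \ref{lem-dya-subtree} and \ref{lem-two-embed} to get the sandwiching
\[
M^*(\TT_2, \alpha[\mathfrak{l}^-(\mathfrak{q})]) \,\le_s\, M^*(\TT(\mathfrak{q}), \alpha) \,\le_s\, M^*(\TT_2, \alpha[\mathfrak{l}^+(\mathfrak{q})])
\]
together with the analogous stochastic dominations for $\Sigma_N(\TT(\mathfrak{q}), \alpha)$ and for $\sup_{\xi \in \partial \TT(\mathfrak{q})} |X(\TT(\mathfrak{q}), \alpha; \xi)|$. The same root-preserving isometric embeddings also transfer a.s.\ uniform convergence: uniform convergence on $\TT(2^{\mathfrak{l}^+(\mathfrak{q})})$ restricts to its subset $\TT(\mathfrak{q})$, and uniform convergence on $\TT(\mathfrak{q})$ restricts in turn to $\TT(2^{\mathfrak{l}^-(\mathfrak{q})})$.

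Finally I would assemble the equivalences. From Proposition~\ref{prop-eq-binary} applied to $(\TT_2, \alpha[\mathfrak{l}^+(\mathfrak{q})])$, the hypothesis $Q(\mathfrak{q}; \alpha) < \infty$ is equivalent to $Q(\alpha[\mathfrak{l}^+(\mathfrak{q})]) < \infty$ and hence forces a.s.\ boundedness of $M^*$, the $L^2$-decay \eqref{T-q-unif}, and a.s.\ uniform convergence on the larger tree, all of which descend to $\TT(\mathfrak{q})$ via the sandwich. Conversely, any of the three analytic conditions on $\TT(\mathfrak{q})$ restricts to the smaller tree and, by Proposition~\ref{prop-eq-binary} applied to $(\TT_2, \alpha[\mathfrak{l}^-(\mathfrak{q})])$, yields $Q(\alpha[\mathfrak{l}^-(\mathfrak{q})]) < \infty$ and hence $Q(\mathfrak{q}; \alpha) < \infty$. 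The quantitative bound \eqref{eqn::universal-constant} follows from the $L^2$-estimates of Proposition~\ref{prop-eq-binary} for $\alpha[\mathfrak{l}^\pm(\mathfrak{q})]$, chained with the stochastic dominations above and the comparability from step one. The main obstacle is precisely that first step: it is where the assumption $q_i \ge 2$ is indispensable, since otherwise $\lfloor \log q_i / \log 2 \rfloor$ could vanish and destroy the lower comparison.
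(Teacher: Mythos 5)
Your proposal is correct and follows essentially the same route as the paper: sandwiching $\TT(\mathfrak{q})$ between $\TT(2^{\mathfrak{l}^{-}(\mathfrak{q})})$ and $\TT(2^{\mathfrak{l}^{+}(\mathfrak{q})})$ via Lemmas~\ref{lem-dominance} and \ref{lem-two-embed}, reducing to the binary tree by Lemma~\ref{lem-dya-subtree} and Proposition~\ref{prop-eq-binary}, and using the termwise comparability $Q(\alpha[\mathfrak{l}^{\pm}(\mathfrak{q})])\asymp Q(\mathfrak{q};\alpha)$ (the paper's \eqref{l-pm-alpha}--\eqref{l-alpha}), for which your justification via $q_i\ge 2$ is exactly the right point. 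The only difference is one of detail: the paper writes out the transfer of condition \eqref{T-q-unif} explicitly through the estimate \eqref{sigma-N-es}, whereas you leave it at the level of the stochastic domination of $\Sigma_N$, which suffices.
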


\begin{proof}
In the case where $\mathfrak{q} = (2, 2, 2, \cdots)$, we have $\TT(\mathfrak{q})= \TT_2$ and Proposition \ref{prop-comp-binary} follows immediately from Proposition \ref{prop-eq-binary}. If $\mathfrak{q} = 2^{\mathfrak{l}} = (2^{l_1}, 2^{l_2}, \cdots)$ with $l_i$ positive integers, then by Lemma \ref{lem-dya-subtree}, all the statements in Proposition \ref{prop-comp-binary} is the consequence of Proposition \ref{prop-eq-binary} applied to the sequence $\alpha[\mathfrak{l}]$ defined in \eqref{def-beta}.

Now we deal with the more general case $\mathfrak{q}= (q_1, q_2, \cdots)$ with $q_i\ge 2$. By Lemmas \ref{lem-dominance} and \ref{lem-two-embed}, we have the stochastic dominations:
\[
M^*(\TT(2^{\mathfrak{l}^{-}(\mathfrak{q})}), \alpha)\le_s M^*(\TT(\mathfrak{q}), \alpha)\le_s M^*(\TT(2^{\mathfrak{l}^{+}(\mathfrak{q})}), \alpha)
\]
and
\[
\Sigma_N(\TT(2^{\mathfrak{l}^{-}(\mathfrak{q})}), \alpha)\le_s \Sigma_N(\TT(\mathfrak{q}), \alpha)\le_s \Sigma_N(\TT(2^{\mathfrak{l}^{+}(\mathfrak{q})}), \alpha) \quad \text{for all $N\in\N$.}
\]
Thus the statements in Proposition \ref{prop-comp-binary} for $\TT(\mathfrak{q})$ follows from  those for $\TT(2^{\mathfrak{l}^{\pm}(\mathfrak{q})})$ by the following simple observation: there exist two numerical constants $c_1, c_2>0$ such that
\begin{align}\label{l-pm-alpha}
c_1 Q(\alpha[\mathfrak{l}^{-}(\mathfrak{q})]) \le Q(\alpha[\mathfrak{l}^{+}(\mathfrak{q})]) \le c_2 Q(\alpha[\mathfrak{l}^{-}(\mathfrak{q})])
\end{align}
and
\begin{align}\label{l-alpha}
c_1 Q(\alpha[\mathfrak{l}^{\pm}(\mathfrak{q})]) \le Q(\mathfrak{q}; \alpha) \le c_2 Q(\alpha[\mathfrak{l}^{\pm}(\mathfrak{q})]),
\end{align}
where $Q(\mathfrak{q}; \alpha)$ is defined in \eqref{def-Q-hat}.  We now prove the implication $(4) \Longrightarrow (2)$. Assume that $ Q(\mathfrak{q}; \alpha)<\infty$, then $Q(\alpha[\mathfrak{l}^{+}(\mathfrak{q})])<\infty$. Denote
\[
\mathfrak{l}^{+}(\mathfrak{q}) = (l_1, l_2, \cdots) = \Big(\Big\lceil  \frac{\log q_1}{\log 2} \Big\rceil, \Big\lceil  \frac{\log q_2}{\log 2} \Big\rceil, \dots \Big).
\]
Then by the assumption $q_i\ge 2$, we have  $l_i\ge 1$ for all $i\ge 1$.
Now by the stochastic domination
\[
\Sigma_N(\TT(\mathfrak{q}), \alpha)\le_s \Sigma_N(\TT(2^{\mathfrak{l}^{+}(\mathfrak{q})}), \alpha) \quad \text{for all $N\in\N$}
\]
and the equality of the distribution
\[
  \Sigma_N(\TT(2^{\mathfrak{l}^{+}(\mathfrak{q})}), \alpha)   \stackrel{d}{=}  \Sigma_{K(N)} (\TT_2; \alpha[\mathfrak{l}^{+}(\mathfrak{q})]) \quad \text{with $K(N)=2^{l_1 + \cdots+l_N}$,}
\]
we obtain
\begin{align*}
\E[\Sigma_N(\TT(\mathfrak{q}), \alpha)^2] \le \E [\Sigma_{K(N)} (\TT_2; \alpha[\mathfrak{l}^{+}(\mathfrak{q})])^2] \quad \text{for all $N\in \N$.}
\end{align*}
Now by  \eqref{sigma-N-es}, there exists a numerical constant $C>0$ such that
\begin{align*}
\E [\Sigma_{K(N)} (\TT_2; \alpha[\mathfrak{l}^{+}(\mathfrak{q})])^2]  \le  C \Big[ 2\sqrt{K(N)} Q_{K(N)-1}(\alpha[\mathfrak{l}^{+}(\mathfrak{q})]) + \sum_{l=K(N)}^{\infty}\frac{1}{\sqrt{l+1}} Q_l(\alpha[\mathfrak{l}^{+}(\mathfrak{q})]) \Big]^2.
\end{align*}
By applying similar inequality as \eqref{N-Q-N}, we obtain
\[
\limsup_{N\to\infty}\E[\Sigma_N(\TT(\mathfrak{q}), \alpha)^2] \le \lim_{N\to\infty} \E [\Sigma_{K(N)} (\TT_2; \alpha[\mathfrak{l}^{+}(\mathfrak{q})])^2] = 0.
\]
This is the desired limit equality  \eqref{T-q-unif}  and we complete the proof of the implication $(4) \Longrightarrow (2)$. The remaining part of Proposition \ref{prop-comp-binary} can be similarly proved.
\end{proof}

\subsection{Proofs of Theorem~\ref{thm-bdd-degree} and Theorem~\ref{thm-all-tree}}
Given a rooted tree $\TT$ and a number sequence $\alpha=(\alpha_k)_{k=1}^\infty$.
Define two sequences $\mathfrak{q}=(q_1,q_2,...)$ and $\tilde{\mathfrak{q}}=(\tilde{q}_1,\tilde{q}_2,...)$ as follows:
\begin{align}\label{eqn::def-q}
q_n:=D_{min}^{(n)}(\TT),\quad \tilde{q}_n:=D_{max}^{(n)}(\TT),\quad n=1,2,3,....
\end{align}
We have two rooted trees $\TT(\mathfrak{q})$ and $\TT(\tilde{\mathfrak{q}})$.
There exist natural root-preserving and partial-order preserving isometric embeddings:
\[
\TT(\mathfrak{q})\hookrightarrow\TT\hookrightarrow\TT(\tilde{\mathfrak{q}})
\]
Hence for any $n\in\mathbb{N}$, we have the stochastic dominations:
\begin{align}\label{eqn::M-star}
\begin{split}
&M_n(\TT(\mathfrak{q}), \alpha)\le_s M_n(\TT, \alpha)\le_s M_n(\TT(\tilde{\mathfrak{q}}), \alpha)
\\
& M^*(\TT(\mathfrak{q}), \alpha)\le_s M^*(\TT, \alpha)\leq_s M^*(\TT(\tilde{\mathfrak{q}}), \alpha)
\end{split}
\end{align}
and
\begin{align}\label{eqn::sigma-n}
\Sigma_n(\TT(\mathfrak{q}), \alpha) \le_s \Sigma_n(\TT, \alpha)\le_s \Sigma_n(\TT(\tilde{\mathfrak{q}}), \alpha).
\end{align}

For a number sequence $\alpha = (\alpha_k)_{k=1}^\infty$, recall
\begin{align}\label{def-Q-alpha}
Q(\alpha): =  \sum_{l=0}^\infty  \Big( \frac{1}{l+1}\sum_{k=l+1}^\infty \alpha_k^2\Big)^{1/2}<\infty.
\end{align}
Then we have the following observation.
\begin{lemma}\label{lem::equiv-q-q}
Suppose that $\mathfrak{q}$ and $\tilde{\mathfrak{q}}$ are defined as \eqref{eqn::def-q} and $Q(\mathfrak{q}; \alpha), Q(\tilde{\mathfrak{q}}; \alpha)$ are defined as \eqref{def-Q-hat}. If the rooted tree $\TT$ satisfies
\eqref{degree-assumption}, then
\[
Q(\alpha)<\infty  \text{ if and only if }Q(\mathfrak{q}; \alpha)<\infty \text{ if and only if }Q(\tilde{\mathfrak{q}}; \alpha)<\infty.
\]
\end{lemma}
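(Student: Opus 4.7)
The strategy is to show directly that under assumption \eqref{degree-assumption}, both $Q(\mathfrak{q}; \alpha)$ and $Q(\tilde{\mathfrak{q}}; \alpha)$ are comparable to $Q(\alpha)$ up to multiplicative constants that depend only on $D_{min}(\TT)$ and $D_{max}(\TT)$. Once this two-sided comparison is in hand, the equivalence of the three finiteness conditions is immediate. The whole argument is essentially a term-by-term estimate; there is no serious obstacle.

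First, I would note that \eqref{eqn::def-q} together with \eqref{degree-assumption} gives
\[
2 \le D_{min}(\TT) \le q_n \le \tilde{q}_n \le D_{max}(\TT) < \infty \quad \text{for all } n \in \N.
\]
Taking logarithms and summing over $n = 1, \ldots, k+1$, I obtain
\[
(k+1)\log D_{min}(\TT) \le \log(q_1 \cdots q_{k+1}) \le \log(\tilde{q}_1 \cdots \tilde{q}_{k+1}) \le (k+1)\log D_{max}(\TT).
\]
Because $D_{min}(\TT) \ge 2$, the quantity $\log D_{min}(\TT)$ is strictly positive, so all denominators below are non-zero.

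Next, I would plug these bounds into the definition \eqref{def-Q-hat}. For the coefficient appearing in $Q(\mathfrak{q}; \alpha)$,
\[
\frac{\log D_{min}(\TT)}{\sqrt{\log D_{max}(\TT)}} \cdot \frac{1}{\sqrt{k+1}} \;\le\; \frac{\log q_{k+1}}{\sqrt{\log(q_1 \cdots q_{k+1})}} \;\le\; \frac{\log D_{max}(\TT)}{\sqrt{\log D_{min}(\TT)}} \cdot \frac{1}{\sqrt{k+1}},
\]
and the same two-sided bound holds with $\tilde{q}_n$ in place of $q_n$. Thus the coefficients in $Q(\mathfrak{q}; \alpha)$ and $Q(\tilde{\mathfrak{q}}; \alpha)$ are each comparable to $(k+1)^{-1/2}$, with comparison constants depending only on $D_{min}(\TT)$ and $D_{max}(\TT)$.

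Finally, writing out \eqref{def-Q-alpha} in the form
\[
Q(\alpha) = \sum_{l=0}^\infty \frac{1}{\sqrt{l+1}}\Big(\sum_{k=l+1}^\infty \alpha_k^2\Big)^{1/2},
\]
and summing the previous pointwise bounds against the non-negative factor $(\sum_{n=k+1}^\infty \alpha_n^2)^{1/2}$, I obtain constants $c, c' > 0$ (depending only on $D_{min}(\TT)$ and $D_{max}(\TT)$) such that
\[
c\, Q(\alpha) \;\le\; Q(\mathfrak{q}; \alpha) \;\le\; Q(\tilde{\mathfrak{q}}; \alpha) \;\le\; c'\, Q(\alpha).
\]
The three asserted equivalences of finiteness then follow at once, completing the proof.
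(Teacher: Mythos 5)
Your argument is correct and is precisely the elementary term-by-term comparison that the paper leaves unproved (the lemma is stated there as an ``observation'' with no written proof): under \eqref{degree-assumption} one has $2\le D_{min}(\TT)\le q_{k+1}\le \tilde q_{k+1}\le D_{max}(\TT)$, hence each coefficient $\log q_{k+1}/\sqrt{\log(q_1\cdots q_{k+1})}$ is squeezed between $\log D_{min}(\TT)/\sqrt{(k+1)\log D_{max}(\TT)}$ and $\log D_{max}(\TT)/\sqrt{(k+1)\log D_{min}(\TT)}$, so both $Q(\mathfrak{q};\alpha)$ and $Q(\tilde{\mathfrak{q}};\alpha)$ are two-sidedly comparable to $Q(\alpha)$. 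One small inaccuracy: the middle inequality $Q(\mathfrak{q};\alpha)\le Q(\tilde{\mathfrak{q}};\alpha)$ in your final display does not follow from $q_n\le\tilde q_n$ (enlarging $q_1,\dots,q_k$ while keeping $q_{k+1}$ fixed \emph{decreases} the coefficient through the denominator), but this is harmless since you have already shown each quantity separately comparable to $Q(\alpha)$, which is all the equivalence of the three finiteness conditions requires.
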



\begin{proof}[Proof of Theorem \ref{thm-bdd-degree}]
$(1)\Longrightarrow (2)$. If $M^*(\TT,\alpha)<\infty$ a.s., then by \eqref{eqn::M-star},
\[
M^*(\TT(\mathfrak{q}),\alpha)<\infty \,\, a.s.
\]
 By Proposition \ref{prop-comp-binary}, $Q(\mathfrak{q};\alpha)<\infty$ and hence $Q(\tilde{\mathfrak{q}};\alpha)<\infty$ by Lemma \ref{lem::equiv-q-q}. Using Proposition \ref{prop-comp-binary} again, we have
\begin{align*}
\lim_{N\rightarrow\infty}\mathbb{E}[\Sigma_N(\TT(\tilde{\mathfrak{q}}),\alpha)]=\lim_{N\rightarrow\infty}\mathbb{E}\Big[\sup_{n\geq N}\sup_{m\geq 0}\sup_{\xi\in\partial\TT(\tilde{\mathfrak{q}})}\Big|\sum_{k=n}^{n+m}\alpha_kZ(\pi_k(\xi))\Big|^2\Big]=0.
\end{align*}
By \eqref{eqn::sigma-n}, we have
\begin{align*}
\lim_{N\rightarrow\infty}\mathbb{E}[\Sigma_N(\TT,\alpha)]=\lim_{N\rightarrow\infty}\mathbb{E}\Big[\sup_{n\geq N}\sup_{m\geq 0}\sup_{\xi\in\partial\TT}\Big|\sum_{k=n}^{n+m}\alpha_kZ(\pi_k(\xi))\Big|^2\Big]=0.
\end{align*}

$(2)\Longrightarrow (3)$. This step is elementary.

$(3) \Longrightarrow(4)$. Since we have a root-preserving and partial order-preserving isometric embedding $\TT(\mathfrak{q})\hookrightarrow \TT$, the statement (3) for $\TT$ clearly implies the same statement (3) for $\TT(\mathfrak{q})$. Therefore, by Proposition \ref{prop-comp-binary}, we get $Q(\mathfrak{q};\alpha)<\infty$. By Lemma \ref{lem::equiv-q-q}, we have $Q(\alpha)<\infty$.

$(4)\Longrightarrow (1)$. If $Q(\alpha)<\infty$, then by Lemma \ref{lem::equiv-q-q} one has  $Q(\tilde{\mathfrak{q}};\alpha)<\infty$. Using  Proposition~\ref{prop-comp-binary}, we have $M^*(\TT(\tilde{\mathfrak{q}}),\alpha)<\infty$ a.s.  and thus by \eqref{eqn::M-star}, $M^*(\TT,\alpha)<\infty$ a.s.

Finally, \eqref{eqn::equivalent-sup} and \eqref{eqn::equivalent-sup-bis} follow from \eqref{eqn::universal-constant},\eqref{eqn::M-star}, Proposition \ref{prop-comp-binary} and Lemma \ref{lem::equiv-q-q}.
\end{proof}


\begin{lemma}\label{lem::equiv-q}
Suppose that $\mathfrak{q}$ and $\tilde{\mathfrak{q}}$ are defined as \eqref{eqn::def-q} and $Q(\mathfrak{q}; \alpha), Q(\tilde{\mathfrak{q}}; \alpha)$ are defined as \eqref{def-Q-hat}. If the rooted tree $\TT$ satisfies \eqref{eqn::degree-bound}, then
\[
Q(\mathfrak{q}; \alpha)<\infty \text{ if and only if }Q(\tilde{\mathfrak{q}}; \alpha)<\infty.
\]
\end{lemma}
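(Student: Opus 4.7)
The plan is to show that, under the hypothesis \eqref{eqn::degree-bound}, the two quantities $Q(\mathfrak{q};\alpha)$ and $Q(\tilde{\mathfrak{q}};\alpha)$ are in fact comparable up to universal multiplicative constants, so the equivalence is immediate. The assumption $D_{min}(\TT)\ge 2$ guarantees $\log q_n \ge \log 2 >0$ for every $n$, while the hypothesis
\[
K:=\sup_{n\in\N}\frac{\log D_{max}^{(n)}(\TT)}{\log D_{min}^{(n)}(\TT)}<\infty
\]
gives $\log q_n \le \log \tilde q_n \le K \log q_n$ for all $n\ge 1$.

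First, I would set $L_i:=\log q_i$ and $\tilde L_i:=\log \tilde q_i$, so that the general terms of $Q(\mathfrak{q};\alpha)$ and $Q(\tilde{\mathfrak{q}};\alpha)$ are
\[
\frac{L_{k+1}}{\sqrt{L_1+\cdots+L_{k+1}}}\Big(\sum_{n=k+1}^\infty\alpha_n^2\Big)^{1/2} \quad\text{and}\quad \frac{\tilde L_{k+1}}{\sqrt{\tilde L_1+\cdots+\tilde L_{k+1}}}\Big(\sum_{n=k+1}^\infty\alpha_n^2\Big)^{1/2}
\]
respectively. Summing the comparison $L_i\le \tilde L_i\le K L_i$ yields
\[
\sum_{i=1}^{k+1}L_i \;\le\; \sum_{i=1}^{k+1}\tilde L_i \;\le\; K\sum_{i=1}^{k+1}L_i,
\]
so the ratio of the two denominators lies in $[1,\sqrt K]$, while the ratio of the two numerators lies in $[1,K]$. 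Combining these,
\[
\frac{1}{\sqrt K}\cdot\frac{L_{k+1}}{\sqrt{L_1+\cdots+L_{k+1}}}\;\le\;\frac{\tilde L_{k+1}}{\sqrt{\tilde L_1+\cdots+\tilde L_{k+1}}}\;\le\;K\cdot\frac{L_{k+1}}{\sqrt{L_1+\cdots+L_{k+1}}}
\]
for every $k\ge 0$.

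Multiplying through by the common factor $\big(\sum_{n\ge k+1}\alpha_n^2\big)^{1/2}$ and summing in $k$ yields
\[
\frac{1}{\sqrt K}\, Q(\mathfrak{q};\alpha)\;\le\; Q(\tilde{\mathfrak{q}};\alpha)\;\le\; K\, Q(\mathfrak{q};\alpha),
\]
from which the desired equivalence $Q(\mathfrak{q};\alpha)<\infty \iff Q(\tilde{\mathfrak{q}};\alpha)<\infty$ follows at once. Since the argument is purely a term-by-term comparison, there is no real obstacle; the only place where the hypothesis \eqref{eqn::degree-bound} is used is in bounding the ratio $\tilde L_i/L_i$ uniformly, which is exactly what the assumption provides.
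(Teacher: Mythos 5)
Your proof is correct, and the term-by-term comparison via $\log q_n \le \log\tilde q_n \le K\log q_n$ (valid because $D^{(n)}_{min}(\TT)\ge 2$ makes each $\log q_n$ bounded below by $\log 2>0$) is exactly the natural argument; the paper itself states this lemma without proof, treating it as elementary, so your write-up simply supplies the intended verification. The resulting two-sided bound $K^{-1/2}Q(\mathfrak{q};\alpha)\le Q(\tilde{\mathfrak{q}};\alpha)\le K\,Q(\mathfrak{q};\alpha)$ is a slightly stronger quantitative statement than the equivalence that was asked for, which is a small bonus.
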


\begin{proof}[Proof of Theorem \ref{thm-all-tree}]
The proof of Theorem \ref{thm-all-tree} is similar to that of  Theorem \ref{thm-bdd-degree}, we only need to replace Lemma \ref{lem::equiv-q-q} by Lemma \ref{lem::equiv-q}.
\end{proof}

\section{Appendix: Proof of the implications in Remark \ref{Thm::Threeconditions}}
 (c-1)$\Longrightarrow$ (c-2): If $\lim_{k\to\infty}\alpha_k =  0$, then we can write
$
\alpha_k=\sum_{n=k}^{\infty}(\alpha_n-\alpha_{n+1}).
$
By the classical Minkowski's integral inequality, we have
\begin{align*}
\Big(\sum_{k=l}^{\infty}\alpha_k^2\Big)^{1/2}
&=\Big(\sum_{k=l}^{\infty}\Big|\sum_{n=k}^{\infty}(\alpha_n-\alpha_{n+1})\Big|^2\Big)^{1/2}\le \Big(\sum_{k=l}^{\infty}\Big(\sum_{n=l}^{\infty}|\alpha_n-\alpha_{n+1}|\mathds{1}(n\geq k)\Big)^2\Big)^{1/2}
\\
&\leq\sum_{n=l}^{\infty}\Big(\sum_{k=l}^{\infty}\Big(|\alpha_n-\alpha_{n+1}|\mathds{1}(n\geq k)\Big)^2\Big)^{1/2}
 \leq \sum_{n=l}^{\infty}|\alpha_n-\alpha_{n+1}|\sqrt{n-l+1}.
\end{align*}
Using the following elementary inequality,
\[
\sum_{l=1}^n \sqrt{\frac{n-l+1}{l}} \le  \sum_{l=1}^n \sqrt{\frac{n}{l}} \le n,
\]
we obtain
\begin{align*}
Q(\alpha)=&\sum_{l=1}^{\infty}\Big(\frac{1}{l}\sum_{k=l}^{\infty}\alpha_k^2\Big)
^{1/2}
\leq\sum_{l=1}^{\infty}\sum_{n=l}^{\infty}|\alpha_n-\alpha_{n+1}|\sqrt{\frac{n-l+1}{l}}\\
=&\sum_{n=1}^{\infty}|\alpha_n-\alpha_{n+1}|\sum_{l=1}^{n}\sqrt{\frac{n-l+1}{l}}\leq \sum_{n=1}^{\infty}n|\alpha_n-\alpha_{n+1}|.
\end{align*}
Hence the condition (c-1) implies the condition (c-2).

(c-2)$\Longrightarrow$(c-3). Suppose $Q(\alpha)<\infty$, using Cauchy-Schwarz inequality, we have
\begin{align*}
\sum_{k=l}^{\infty}\frac{\alpha_k}{k}\leq \Big(\sum_{k=l}^{\infty}\frac{1}{k^2}\Big)^{1/2}
\Big(\sum_{k=l}^{\infty}\alpha_k^2\Big)^{1/2}\leq \frac{4}{\sqrt{l}}\Big(\sum_{k=l}^{\infty}\alpha_k^2\Big)^{1/2}.
\end{align*}
Then
\begin{align*}
\sum_{k=1}^{\infty}\alpha_k = \sum_{l=1}^{\infty}\sum_{k=l}^{\infty}\frac{\alpha_k}{k}\leq \sum_{l=1}^{\infty}\frac{4}{\sqrt{l}}\Big(\sum_{k=l}^{\infty}\alpha_k^2\Big)^{1/2}=4Q(\alpha)<\infty.
\end{align*}

Finally, suppose that  $(\alpha_k)_{k\ge 1}$ is non-increasing and $\sum_{k=1}^\infty \alpha_k<\infty$.  Then for any $n\in \N$, we have
\begin{align*}
\sum_{k=1}^{n}k|\alpha_k-\alpha_{k+1} &| = \sum_{k=1}^{n}k(\alpha_k-\alpha_{k+1})
=\alpha_1-n\alpha_{n+1}+\sum_{k=2}^{n}\alpha_k
\\
\le &  \alpha_1 + n \alpha_{n+1}+\sum_{k=2}^{n}\alpha_k
\le  \alpha_1 + \sum_{k=2}^{n+1} \alpha_k + \sum_{k=2}^{n}\alpha_k \le 2 \sum_{k=1}^\infty \alpha_k.
\end{align*}
It follows that
\[
\sum_{k=1}^\infty k | \alpha_k - \alpha_{k+1}|\le 2 \sum_{k=1}^\infty \alpha_k<\infty.
\]
Note also that by (c-3), we have $\alpha_k\rightarrow 0$. Hence (c-3) implies (c-1) if $(\alpha_k)_{k\ge 1}$ is non-increasing.

\begin{thebibliography}{}

\bibitem[1]{Fernique-1976}
X.~Fernique.
\newblock Caract\'{e}risation de processus \`a trajectoires major\'{e}es ou
  continues.
\newblock In {\em S\'{e}minaire de {P}robabilit\'{e}s, {XII} ({U}niv.
  {S}trasbourg, {S}trasbourg, 1976/1977)}, volume 649 of {\em Lecture Notes in
  Math.}, pages 691--706. Springer, Berlin, 1978.

\bibitem[2]{FZ-2012}
M.~Fang and O.~Zeitouni.
\newblock Branching random walks in time inhomogeneous environments.
\newblock {\em Electron. J. Probab.}, 17:no. 67, 18, 2012.

\bibitem[3]{ledouxbook}
M.~Ledoux.
\newblock {\em The concentration of measure phenomenon}, volume~89 of {\em
  Mathematical Surveys and Monographs}.
\newblock American Mathematical Society, Providence, RI, 2001.

\bibitem[4]{Lifshits-TAMS}
M.~Lifshits.
\newblock Bounds for entropy numbers for some critical operators.
\newblock {\em Trans. Amer. Math. Soc.}, 364(4):1797--1813, 2012.

\bibitem[5]{LW-2011}
M.~Lifshits and W.~Linde.
\newblock Compactness properties of weighted summation operators on trees.
\newblock {\em Studia Math.}, 202(1):17--47, 2011.

\bibitem[6]{Russian-EJP}
M.~Lifshits and W.~Linde.
\newblock Random {G}aussian sums on trees.
\newblock {\em Electron. J. Probab.}, 16:no. 24, 739--763, 2011.

\bibitem[7]{Pisier-book}
M.~Marcus and G.~Pisier.
\newblock {\em Random {F}ourier series with applications to harmonic analysis},
  volume 101 of {\em Annals of Mathematics Studies}.
\newblock Princeton University Press, Princeton, N.J.; University of Tokyo
  Press, Tokyo, 1981.

\bibitem[8]{Pisier-acta}
M.~Marcus and G.~Pisier.
\newblock Characterizations of almost surely continuous {$p$}-stable random
  {F}ourier series and strongly stationary processes.
\newblock {\em Acta Math.}, 152(3-4):245--301, 1984.

\bibitem[9]{Rudin1962}
W.~Rudin.
\newblock {\em Fourier analysis on groups}.
\newblock Interscience Tracts in Pure and Applied Mathematics, No. 12.
  Interscience Publishers (a division of John Wiley and Sons), New York-London,
  1962.

\bibitem[10]{Talagrand-acta}
M.~Talagrand.
\newblock Regularity of {G}aussian processes.
\newblock {\em Acta Math.}, 159(1-2):99--149, 1987.

\end{thebibliography}

\end{document}